\newcommand\diag{\textnormal{diag}}
\newcommand\tr{\textnormal{tr}}
\newcommand\var{\textnormal{var}}
\newcommand\e{\textnormal{e}}
\newcommand\opt{\textnormal{opt}}
\newcommand\bone{{\bf{1}}}
\newcommand\rmm{r_-}
\newcommand\rpp{r_+}
\newcommand\vp{\varphi_+}
\newcommand\vm{\varphi_-}
\newtheorem{theorem}{Theorem}
\newtheorem{lemma}{Lemma}
\newtheorem{corollary}{Corollary}
\newtheorem{property}{Property}
\newtheorem{definition}{Definition}
\newtheorem{remark}{Remark}
\author{
Linda S. L. Tan\\
\normalsize Department of Statistics and Applied Probability,\\ 
\normalsize National University of Singapore, \\
\normalsize Blk S16, Level 7, 6 Science Drive 2, Singapore 117546\\
\normalsize statsll@nus.edu.sg
}
\date{}
\title{Explicit inverse of tridiagonal matrix with \\
applications in autoregressive modeling}
\begin{document}
\maketitle

\begin{abstract}
We present the explicit inverse of a class of symmetric tridiagonal matrices which is almost Toeplitz, except that the first and last diagonal elements are different from the rest. This class of tridiagonal matrices are of special interest in complex statistical models which uses the first order autoregression to induce dependence in the covariance structure, for instance, in econometrics or spatial modeling. They also arise in interpolation problems using the cubic spline. We show that the inverse can be expressed as a linear combination of Chebyshev polynomials of the second kind and present results on the properties of the inverse, such as bounds on the row sums, the trace of the inverse and its square, and their limits as the order of the matrix increases.
\\[2mm]
Keywords: tridiagonal matrix; explicit inverse; time series models; first order autoregression. \\
Mathematical sub classification numbers: 15A09, 65F50, 62M10, 91G70, 91B72
\end{abstract}

\section{Introduction}
Tridiagonal matrices occur in many areas of science, such as mathematics, econometrics and quantum mechanics. For instance, tridiagonal linear systems often arise in the solving of interpolation problems, boundary value problems and partial differential equations using finite difference methods \citep{Pozrikidis2014}. The inversion of both the general form as well as some special classes of tridiagonal matrices has thus been studied extensively \citep{Schlegel1970, Lewis1982, Heinig1984}. A comprehensive review is given in \cite{Meurant1992}. Various algorithms have also been proposed for efficient computation of the inverse \citep{ElMikkawy2006, Hadj2008, Ran2009}. 

Formulas for the inverse of the general tridiagonal matrix have been derived by several authors based on different approaches \citep[e.g.][]{Yamamoto1979, Usmani1994, Huang1997}, such as linear difference equation \citep{Mallik2001} and backward continued fractions \citep{Kilic2008}. These formulas usually involve recurrence relations and are not of explicit closed form, or they may reduce to closed form only for some special classes. \cite{Meurant1992} presents an explicit inverse for the Toeplitz tridiagonal matrix by solving the recurrences in its Cholesky decomposition analytically. Extending these results, \cite{Fonseca2001, Fonseca2005} express the inverse of $k$-Toeplitz tridiagonal matrices explicitly in terms of Chebyshev polynomials of the second kind. An order $n$ $k$-Toeplitz tridiagonal matrix is of the form $A=[a_{ij}]$ with $a_{i+k, j+k} = a_{ij}$ for $i,j=1, \dots, n-k$ and $a_{ij} = 0$ if $|i-j|>1$. That is, the diagonal, subdiagonal and superdiagonal entries are $k$-periodic. A Toeplitz tridiagonal matrix is obtained when $k=1$. \cite{Encinas2018a} present the explicit inverse of a $(p, r)$-Toeplitz tridiagonal matrix, in which each diagonal is a quasi-periodic sequence with period $p$ but multiplied by a real number $r$. Such analytic formulas are important in studying the properties of the inverse, for instance, the rate of decay of elements along a row or column or for establishing bounds \citep{Nabben1999a, Nabben1999b}. In a closely related but independent work, \cite{Encinas2018b} present the explicit inverse of tridiagonal matrices, and necessary and sufficient conditions for their invertibility, which are derived using the solution of Sturm-Liouville boundary value problems associated to second order linear difference equations expressed through a discrete Schr\"{o}dinger operator. Our article focuses on the inverse of a more specialized form of the tridiagonal matrix and its properties, which has applications in econometric and statistical modeling problems.

In this article, we consider real symmetric $n \times n$ tridiagonal matrices of the form 
\begin{equation} \label{tridiag_form}
Q = \begin{bmatrix}
d  & -b & \cdots & 0 & 0 \\
-b & c & \cdots & 0 & 0 \\
\vdots &  \vdots & \ddots & \vdots  & \vdots \\
0 & 0 &  \cdots & c & -b \\
0 &  0 &  \cdots & -b & d \\
\end{bmatrix},
\end{equation}
which is almost Toeplitz except that the first and last diagonal elements are different from the rest. We assume without loss of generality that $b=1$ or $-1$, $c\geq 0$ and $\lambda = c-d \neq 0$. Tridiagonal matrices of this form often arise in interpolation problems, as well as econometrics and spatial modeling when first order autoregression is used to induce dependence in the covariance structure. We note that \cite{Yueh2006}, \cite{Yueh2008} and \cite{Cheng2013} has derived the explicit inverse of complex tridiagonal matrices with constant diagonals and some perturbed elements using symbolic calculus, of which $Q$ arises as a special case. However, here we focus on the real-valued case and present an alternative proof using difference equations (which may be more accessible to practitioners in statistics and econometrics) and show that the elements of $Q^{-1}$ can be viewed as a linear combination of Chebyshev polynomials of the second kind. While \cite{Yueh2006} (Theorems 2 and 3) do consider simplified expressions for some special cases where the perturbed elements are of equal value and at the corners, these cases correspond only to $\lambda = \pm 1$ in the context of $Q$.  We first describe some motivating applications in Section \ref{Sec_appl} for which the results in this article are of special relevance, before deriving the explicit inverse. We also study some properties of $Q^{-1}$. Bounds for the row (or column) sums are also presented and we provide expressions for the trace of $Q^{-1}$ and $Q^{-2}$ and their limiting behavior as the order of the matrix increases. Application of these results are illustrated in Section \ref{sec:eg} using a first order autoregressive process with observational noise.

\section{Applications} \label{Sec_appl}
In this section, we discuss some applications where the tridiagonal matrix in \eqref{tridiag_form} arises, and where the explicit inverse and its properties may be of interest.  

In interpolation problems, the cubic spline is often used to avoid the Runge phenomenon \citep{Knott2000}. When $n$ equidistant knots and clamped (first derivative) boundary conditions are used, a tridiagonal matrix in the form of $Q$ arises in the linear system for evaluating the coefficients of the piecewise cubic polynomials, where $b=-1$, $c=4$ and $d=2$ \citep[see][]{Revesz2014}. 

In econometrics, the stationary first order autoregression, AR(1), is defined as
\begin{equation*}
y_t = \mu + \phi y_{t-1} + \epsilon_t, \quad (t=1, \dots, n)
\end{equation*}
where $\mu$ is a constant, $|\phi|<1$ and $\epsilon_t \sim \text{WN}(0, \sigma_\epsilon^2)$ is a white noise process with zero mean and variance $\sigma_\epsilon^2$. If $\phi \neq 0$, the inverse covariance matrix of $y=(y_1, \dots, y_n)^T$ is $\sigma_\epsilon^{-2} |\phi| Q$, where $b=\phi/|\phi|$, $c=(1+\phi^2)/|\phi| > 2$ and $d=1/|\phi| > 1$. The covariance matrix of the AR(1) and hence the closed form of $Q^{-1}$ is well-known \citep{Nerlove1979}. However, the AR(1) is often used as building blocks in more complex models  as a means of introducing dependence, and deriving the covariance structure in such models becomes a more challenging task. 

Consider for instance the conditional autogressive (CAR) model, which is widely used in modeling spatial data \citep{Besag1974, Cressie1993, Wall2004}. Let $y_i$ denote the observation at site $i$ for $i=1, \dots, n$ and $y_{-i} = \{y_j:j \neq i\}$. The CAR model incorporates spatial dependence into the covariance structure by specifying a Gaussian distribution for
\begin{equation*}
y_i|y_{-i} \sim N\left( \mu_i + \sum\nolimits_{j=1}^n c_{ij} (y_j - \mu_i), \sigma_i^2 \right),
\end{equation*}
where $\mu_i$ is the mean of $y_i$, $\sigma_i^2$ is the conditional variance and $c_{ij}$ is a covariance parameter. Let $\mu=(\mu_1, \dots, \mu_n)$, $C=[c_{ij}]$ and $T=\diag(\sigma_1^2, \dots, \sigma_n^2)$. Then $y=(y_1, \dots, y_n)^T \sim N(\mu, \Sigma)$, where
\begin{equation*}
\Sigma = (I_n - C)^{-1}T.
\end{equation*}
It is common to write $C= \rho W$, where $\rho$ is a spatial dependence parameter and $W$ represents the neighborhood structure of the $n$ sites. If the adjacency structure of a path graph is adopted and the rows of $W$ are restricted to sum to 1, then $W$ is a tridiagonal matrix with a zero diagonal, superdiagonal $(1, 0.5, \dots, 0.5)$ and subdiagonal $(0.5, \dots, 0.5,1)$. For the CAR model, $T^{-1}C$ must be symmetric, which implies that $T =\sigma^2 \text{diag}(1, 0.5, \dots, 0.5, 1)$ for some $\sigma^2>0$. In addition, $|\rho| < 1$ for $\Sigma$ to be positive definite. If $\rho \neq 0$, $\Sigma^{-1} = T^{-1} - \rho T^{-1}W = \sigma^{-2} |\rho| Q$ where $b=\rho/|\rho|$, $c=2/|\rho| > 2$ and $d=1/|\rho| > 1$. \cite{Muenz2017} notes that the elements of $\Sigma$ and hence $Q^{-1}$ are of interest for model calibration.

In Section \ref{sec:eg}, we discuss a reparametrization of the AR(1) plus noise model and use this setting demonstrate how the results derived in this article can be applied in econometrics and statistical modeling.

\section{Inverse of tridiagonal matrix}
First we introduce Chebyshev polynomials of the second kind, which are defined as solutions to the recurrence equation
\begin{equation} \label{Chebyshev_recurrence}
U_{n+1}(x) - 2x U_n(x) + U_{n-1}(x) = 0, \quad n=1, 2, \dots
\end{equation}
with initial conditions $U_0(x) = 1$ and $U_1(x) =2x$. The solution is of the form
\begin{equation*}
U_n(x) =
\begin{cases}
{\sin\{(n+1)\theta\}}/{\sin \theta}, \text{ where } \cos\theta =x  & \text{if } |x| < 1, \\
(\pm1)^n (n+1) & \text{if } x=\pm1, \\
{\sinh\{(n+1)\theta\}}/{\sinh\theta}, \text{ where } \cosh\theta =x  & \text{if } |x| > 1.
\end{cases}
\end{equation*}
The definition of Chebyshev polynomials of the second kind may be extended to negative indices via $U_{-n}(x) = -U_{n-2}(x)$ for negative $n$, with $U_{-1}(x) = 0$.

In Lemma \ref{lemma_de}, we show that the solution of a second order difference equation, which differs from \eqref{Chebyshev_recurrence} in terms of initial conditions, can be expressed as a linear combination of $U_n(c/2)$. Then we present a closed form expression for $Q^{-1}$ in Theorem \ref{thm_invOmega}.

\begin{definition} \label{def1}
If $c \neq 2$, then $r_{\pm} = (c \pm \sqrt{c^2-4})/2$, $\varphi_{\pm} = r_{\pm} - \lambda$, $\kappa_i = \vp\rpp^i - \vm \rmm^i$ where $i$ is an integer and $\kappa = \vp^2 \rpp^{n-1} - \vm^2 \rmm^{n-1}$.
\end{definition}
\noindent Note that $\rpp \rmm = 1$, $\rpp + \rmm =c$ and $\kappa_0 = \rpp - \rmm$. In addition, $\kappa_i$ and $\kappa$ are real if $c>2$ and purely imaginary if $c<2$.

\begin{lemma} \label{lemma_de}
Consider the second order difference equation
\begin{equation} \label{recur_rel}
\beta_i = c\beta_{i-1} - \beta_{i-2}, \quad (i=2, \dots, n-1),
\end{equation}
with initial conditions $\beta_0 = 1$ and $\beta_1 = d$. Let $ \lambda = c-d \neq 0$. The solution is given by
\begin{equation} \label{beta_i}
\beta_i = U_i(c/2) - \lambda U_{i-1}(c/2),  \quad (i=0, 1,\dots, n-1).
\end{equation}
If $c \neq 2$, then $\beta_i=\kappa_i/\kappa_0$ for $i=0, 1,\dots, n-1$.
\end{lemma}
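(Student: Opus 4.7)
The plan is to exploit the fact that \eqref{recur_rel} is a second order linear recurrence, whose solution space is two-dimensional; any two linearly independent solutions together with matching of the initial values $\beta_0,\beta_1$ pin down the answer. I would therefore produce each of the two claimed formulas by identifying it as a linear combination of two known solutions and then verifying the initial conditions.

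For the identity $\beta_i=U_i(c/2)-\lambda U_{i-1}(c/2)$, I would first observe that the Chebyshev recurrence \eqref{Chebyshev_recurrence} evaluated at $x=c/2$ reads $U_{i+1}(c/2)=cU_i(c/2)-U_{i-1}(c/2)$, which is precisely \eqref{recur_rel}. Hence both sequences $\{U_i(c/2)\}$ and the shifted sequence $\{U_{i-1}(c/2)\}$ (using the stated extension $U_{-1}(x)=0$) satisfy the recurrence, and so does any linear combination. It then remains to check the two initial values: $U_0(c/2)-\lambda U_{-1}(c/2)=1-0=1=\beta_0$ and $U_1(c/2)-\lambda U_0(c/2)=c-\lambda=d=\beta_1$. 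Uniqueness of solutions given $\beta_0,\beta_1$ delivers \eqref{beta_i}.

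For the case $c\neq 2$, I would switch to the characteristic-roots viewpoint. The characteristic polynomial $x^2-cx+1=0$ has distinct roots $r_{\pm}=(c\pm\sqrt{c^2-4})/2$, so any solution of \eqref{recur_rel} is of the form $\beta_i=Ar_+^{\,i}+Br_-^{\,i}$. Imposing $A+B=1$ and $Ar_++Br_-=d$ and using $r_++r_-=c$ and $d=c-\lambda$, I would solve for $A=(d-r_-)/(r_+-r_-)$ and $B=(r_+-d)/(r_+-r_-)$; these simplify via $d-r_-=r_+-\lambda=\vp$ and $r_+-d=-(r_--\lambda)=-\vm$, giving $A=\vp/\kappa_0$ and $B=-\vm/\kappa_0$. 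Substituting yields $\beta_i=(\vp r_+^{\,i}-\vm r_-^{\,i})/\kappa_0=\kappa_i/\kappa_0$.

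I do not expect a substantive obstacle here; the argument is standard once the Chebyshev recurrence is aligned with \eqref{recur_rel}. The only thing to be careful about is the sign bookkeeping around $\vp,\vm$ and the convention $U_{-1}(x)=0$. As a sanity check, using the closed form $U_n(c/2)=(r_+^{\,n+1}-r_-^{\,n+1})/\kappa_0$ valid when $c\neq 2$ (and $c>2$; the $c<2$ case is analogous with purely imaginary roots), one immediately obtains $U_i(c/2)-\lambda U_{i-1}(c/2)=\kappa_i/\kappa_0$, so the two representations of $\beta_i$ agree where both are defined.
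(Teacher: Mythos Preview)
Your argument is correct. For the identity $\beta_i=U_i(c/2)-\lambda U_{i-1}(c/2)$ you take a verify-and-invoke-uniqueness route: since both $\{U_i(c/2)\}$ and its shift satisfy \eqref{recur_rel}, so does the stated combination, and matching $\beta_0,\beta_1$ finishes it. The paper instead proceeds constructively, splitting into the cases $c=2$, $c>2$, and $0\le c<2$, solving the characteristic equation explicitly in each, and then identifying the resulting closed form with the $\sinh$/$\sin$/polynomial representation of $U_n$. Interestingly, the paper itself acknowledges your shortcut in the paragraph immediately following its proof, but explains that the longer derivation was chosen deliberately so as to introduce $r_\pm$ and $\kappa_i$ and to obtain $\beta_i=\kappa_i/\kappa_0$ as a by-product of the same computation. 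For that second claim your characteristic-roots argument is essentially identical to the paper's: both impose $\beta_0=1$, $\beta_1=d$ on $Ar_+^{\,i}+Br_-^{\,i}$ and arrive at $A=\vp/\kappa_0$, $B=-\vm/\kappa_0$. Your route is cleaner and case-free for the Chebyshev identity; the paper's buys the explicit-root machinery that the remainder of the article relies on.
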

\begin{proof}
The characteristic equation of the second-order difference equation is $r^2-cr+1=0$ and the roots are $r_{\pm} = (c \pm \sqrt{c^2-4})/2$.

If $c =2$, the characteristic equation has a single root $r=1$ and the general solution  is $\beta_i = d_0+d_1i$. Applying the initial conditions, we obtain $d_0=1$ and $d_1=d-1$. Thus $\beta_i = 1 +(d-1)i = 1+(1-\lambda)i = U_i(1) - \lambda U_{i-1}(1)$ for $i=0,1, \dots, n-1$. 

If $c\neq2$, there are two distinct roots and the general solution is $\beta_i = d_0 \rpp^{i}+ d_1 \rmm^{i}$. Since $\rpp+\rmm = c = d + \lambda$, we have from the initial conditions, 
\begin{equation*}
\begin{aligned}
d_0 = \frac{\rpp - \lambda}{\rpp - \rmm}, \quad 
d_1 = - \frac{\rmm - \lambda}{\rpp - \rmm},
\end{aligned}
\end{equation*}
Thus, for $ i=0, 1, \dots, n-1$, 
\begin{equation*}
\beta_i = \frac{\rpp^{i+1} - \rmm^{i+1} - \lambda(\rpp^i - \rmm^i)}{\rpp - \rmm} = \frac{\rpp^{i+1} - \rpp^{-(i+1)} - \lambda(\rpp^i - \rpp^{-i})}{\rpp - \rpp^{-1}}.
\end{equation*}
If $c>2$, the roots are real. Let $x = \cosh^{-1}(c/2) = \ln(c/2+\sqrt{c^2/4 -1}) = \ln(\rpp)$. Then $\cosh(x) = c/2$ and $\rpp = \e^x$ so that
\begin{equation*}
\beta_i = \frac{\sinh\{(i+1)x\} - \lambda \sinh(ix)}{\sinh(x)} = U_i(c/2) - \lambda U_{i-1}(c/2).
\end{equation*}
If $0 \leq c < 2$, the roots are complex and $\rpp = e^{\mathrm{i} \theta} = \cos \theta + \mathrm{i} \sin \theta$ where $\cos\theta = c/2$. Thus 
 \begin{equation*}
\beta_i = \frac{ 2\mathrm{i}\sin\{(i+1)\theta\} - \lambda2\mathrm{i}\sin(i\theta)}{2\mathrm{i}\sin(\theta)} = U_i(c/2) - \lambda U_{i-1}(c/2).
\end{equation*}
Since $d_0 = \vp/\kappa_0$ and $d_1 = \vm/\kappa_0$, it follows from the general solution that $\beta_i = \kappa_i/\kappa_0$. 
\end{proof}

The proof of the result in \eqref{beta_i} can be simplified by noting that $\beta_i = U_i(c/2)$ is a primary solution of \eqref{recur_rel}, and hence \eqref{beta_i} is also a solution of \eqref{recur_rel}. See Theorem 3.1 of \cite{Aharonov2005} and \cite{Encinas2018c}. Thus it suffices to show that \eqref{beta_i} satisfies the initial conditions $\beta_0 = 1$ and $\beta_1 = d$. However, we have presented a constructive proof based on the characteristic equation of the second order difference equation as we wanted to introduce the terms $r_{\pm}$ and $\kappa_i$, and to show that $\beta_i  = \kappa_i/\kappa_0$ when $c \neq 2$. These results will be important in the rest of the article.

\begin{theorem}\label{thm_invOmega}
Suppose $Q$ is a tridiagonal matrix of form \eqref{tridiag_form} where $b=1$ or $-1$, $c\geq 0$ and $\lambda  = c-d \neq 0$. Then $Q^{-1}$ exists if and only if $d\beta_{n-1} - \beta_{n-2} \neq 0$. If $Q^{-1}$ exists, then it is symmetric and $Q^{-1}_{ij} = u_i v_j$ for $i \leq j$, where 
\begin{equation} \label{Qinv}
v_i = \frac{b^{i-1} \beta_{n-i}}{d\beta_{n-1} - \beta_{n-2}}, \quad
u_i = b^{i-1} \beta_{i-1} \quad (i=1, \dots, n).
\end{equation}
\end{theorem}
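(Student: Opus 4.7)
The strategy is to exhibit the candidate inverse $A$ defined by $A_{ij} = u_iv_j$ for $i \le j$ (with $A_{ij} = A_{ji}$ for $i > j$), then verify $QA = I$ entry by entry, handling the invertibility criterion through a determinant computation.

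I would first compute $\det(Q)$ by cofactor expansion along the first row. The $(1,1)$-minor is an $(n{-}1)\times(n{-}1)$ tridiagonal matrix with diagonal $(c,\dots,c,d)$ and off-diagonals $-b$; expanding along its last row yields a determinant obeying exactly the recurrence \eqref{recur_rel} with initial values $1$ and $d$, and so this minor equals $\beta_{n-1}$. The $(1,2)$-cofactor, after one further expansion along the short first column that arises, evaluates to $-b\beta_{n-2}$. Since $b^2 = 1$, assembling gives $\det(Q) = d\beta_{n-1} - \beta_{n-2}$, which settles the "if and only if" invertibility claim.

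Assuming now that $d\beta_{n-1} - \beta_{n-2} \ne 0$, I would verify $(QA)_{ij} = \delta_{ij}$ for interior rows $2 \le i \le n-1$. Because $b^2 = 1$, both sequences $u_i = b^{i-1}\beta_{i-1}$ and $v_i = b^{i-1}\beta_{n-i}/(d\beta_{n-1}-\beta_{n-2})$ satisfy the three-term relation $cx_i = b\,x_{i-1} + b\,x_{i+1}$ inherited from \eqref{recur_rel}. For any off-diagonal $j \ne i$ (including the immediately adjacent positions $j = i \pm 1$), the three contributing entries of $A$ share a common factor of either $v_j$ or $u_j$, so $(QA)_{ij}$ reduces to $v_j(-bu_{i-1} + cu_i - bu_{i+1})$ or $u_j(-bv_{i-1} + cv_i - bv_{i+1})$, which vanishes by the recurrence.

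The main obstacle is the diagonal entry, which does not factor and instead reduces, after using $cu_i - bu_{i-1} = b^{i-1}\beta_i$, to
\begin{equation*}
(QA)_{ii} = \frac{\beta_i\beta_{n-i} - \beta_{i-1}\beta_{n-i-1}}{d\beta_{n-1} - \beta_{n-2}}.
\end{equation*}
I would show the numerator is independent of $i$ by checking that its increment between consecutive indices vanishes, using $\beta_i + \beta_{i-2} = c\beta_{i-1}$ together with $\beta_{n-i+1} + \beta_{n-i-1} = c\beta_{n-i}$. Evaluating at $i=1$ with the initial conditions $\beta_0 = 1$ and $\beta_1 = d$ shows the numerator equals the denominator, so $(QA)_{ii} = 1$. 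Finally, for the boundary rows $i = 1$ and $i = n$ (where $Q$ has only two nonzero entries), direct substitution of the initial conditions into $u_1, u_2, v_{n-1}, v_n$ gives the required Kronecker entries; this completes $QA = I$, whence $A = Q^{-1}$, which is symmetric because $Q$ is.
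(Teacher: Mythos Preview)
Your proof is correct and takes a genuinely different route from the paper. The paper invokes Meurant's (1992) general formula for tridiagonal inverses, which already supplies the factored form $Q^{-1}_{ij}=u_iv_j$ in terms of a continued-fraction sequence $\delta_i$ with $\delta_1=d$, $\delta_i=c-1/\delta_{i-1}$, $\delta_n=d-1/\delta_{n-1}$; the paper then sets $\delta_i=\beta_i/\beta_{i-1}$, observes this reproduces the recurrence in Lemma~\ref{lemma_de}, and telescopes the products to obtain \eqref{Qinv}. You instead work from first principles: a cofactor expansion gives $\det(Q)=d\beta_{n-1}-\beta_{n-2}$ directly, and a brute-force check of $QA=I$---relying on the recurrence $\beta_i=c\beta_{i-1}-\beta_{i-2}$ for off-diagonal entries and on the constancy of $\beta_i\beta_{n-i}-\beta_{i-1}\beta_{n-i-1}$ for the diagonal---finishes the argument. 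The paper's approach is shorter once Meurant's result is granted and makes the appearance of the $\beta_i$'s feel inevitable rather than guessed; your approach is fully self-contained and, in particular, produces the determinant formula (and hence the invertibility criterion) as a clean standalone computation rather than as a by-product of the inverse formula.
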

\begin{proof}
Suppose $Q^{-1}$ exists. From \cite{Meurant1992}, $Q^{-1}_{ij} = u_i v_j$ for $i \leq j$ 
where 
\begin{equation}\label{delta}
\begin{gathered}
v_i = b^{i-1}(\delta_n \dots \delta_{n-i+1})^{-1}, \quad 
u_i= b^{n-i}(\delta_i \dots \delta_n v_n)^{-1},  \quad (i=1, \dots, n), \\
\delta_1 = d, \quad 
\delta_i = c - {1}/{\delta_{i-1}} \quad (i=2, \dots, n-1), \quad
\delta_n = d - {1}/{\delta_{n-1}}.
\end{gathered}
\end{equation}
Let $\delta_i = \beta_i/\beta_{i-1}$ for $i=1, \dots, n-1$. From \eqref{delta}, we have the recurrence relation in \eqref{recur_rel} with the same initial conditions. From Lemma \ref{lemma_de}, $\beta_i = U_i(c/2) - \lambda U_{i-1}(c/2)$ and
\begin{equation*}
\begin{aligned}
v_i  &= b^{i-1} \left( \frac{d\beta_{n-1} - \beta_{n-2}}{\beta_{n-1} }\frac{\beta_{n-1}}{\beta_{n-2}} \dots \frac{\beta_{n-i+1}}{\beta_{n-i}} \right)^{-1} = \frac{b^{i-1} \beta_{n-i}}{d\beta_{n-1} - \beta_{n-2}} \\
u_i &= b^{n-i} \left(\frac{\beta_i}{\beta_{i-1}} \dots  \frac{d\beta_{n-1} - \beta_{n-2}}{\beta_{n-1} } \frac{b^{n-1}}{d\beta_{n-1} - \beta_{n-2}}{\beta_{n-1}}\right)^{-1} = b^{i-1} \beta_{i-1}.
\end{aligned}
\end{equation*} 
Thus we must have $d\beta_{n-1} - \beta_{n-2} \neq 0$. Conversely, if $d\beta_{n-1} - \beta_{n-2} \neq 0$, then $Q^{-1}$ exists as it is given by \eqref{Qinv}.
\end{proof}

\noindent We present below some corollaries of Theorem \ref{thm_rowsum} and some properties of $\{\kappa_i\}$, $\{v_i\}$, $\{u_i\}$ and $Q^{-1}$ which will be useful later.
\begin{corollary}\label{cor_c2}
If $c=2$, 
\begin{equation*}
Q_{ij}^{-1} = b^{i+j} \frac{\{1+(1-\lambda)(i-1)\}\{1+(1-\lambda)(n-j)\}}{(1-\lambda)\{(1-\lambda)(n-1) + 2\}}, \quad (i \leq j).
\end{equation*}
Thus $Q^{-1}$ does not exist if $\lambda=1$ or $\lambda=(n+1)/(n-1)$.
\end{corollary}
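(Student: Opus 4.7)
The plan is to specialize Theorem \ref{thm_invOmega} to the case $c=2$, which is a purely mechanical substitution once the relevant quantities are put in closed form. The key enabling observation is that $U_i(1) = i+1$, so Lemma \ref{lemma_de} gives $\beta_i = (i+1) - \lambda i = 1 + (1-\lambda)i$ for $i=0,1,\dots,n-1$ (indeed the first branch of the lemma's proof already produces exactly this expression).

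Next, I would plug these explicit $\beta_i$ into the formulas $u_i = b^{i-1}\beta_{i-1}$ and $v_j = b^{j-1}\beta_{n-j}/(d\beta_{n-1}-\beta_{n-2})$ from \eqref{Qinv}. The numerator factors $1+(1-\lambda)(i-1)$ and $1+(1-\lambda)(n-j)$ in the claimed formula then drop out of $\beta_{i-1}$ and $\beta_{n-j}$ immediately, and since $b \in \{-1,+1\}$ the sign product collapses to $b^{i-1}b^{j-1} = b^{i+j-2} = b^{i+j}$, matching the prefactor in the corollary.

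The only nontrivial piece is the denominator. With $d = c - \lambda = 2-\lambda$, I would expand
\begin{equation*}
d\beta_{n-1} - \beta_{n-2} = (2-\lambda)\{1+(1-\lambda)(n-1)\} - \{1+(1-\lambda)(n-2)\}
\end{equation*}
and collect terms in $1-\lambda$; a one-line calculation yields $(1-\lambda)\{(1-\lambda)(n-1)+2\}$, which is the denominator appearing in the stated formula for $Q^{-1}_{ij}$.

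Finally, Theorem \ref{thm_invOmega} tells us that $Q^{-1}$ exists precisely when $d\beta_{n-1} - \beta_{n-2} \neq 0$. Reading off the two factors of the simplified denominator, this fails exactly when $1-\lambda = 0$, i.e.\ $\lambda = 1$, or when $(1-\lambda)(n-1) + 2 = 0$, i.e.\ $\lambda = (n+1)/(n-1)$, giving the two exceptional values in the statement. There is no real obstacle here; the whole argument is direct substitution plus the denominator simplification, so I would present it in a couple of compact lines rather than grinding through the algebra.
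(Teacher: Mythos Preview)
Your proposal is correct and follows essentially the same route as the paper's own proof: specialize $\beta_i$ to $1+(1-\lambda)i$ via Lemma~\ref{lemma_de}, substitute into the formulas of Theorem~\ref{thm_invOmega}, simplify $d\beta_{n-1}-\beta_{n-2}$ to $(1-\lambda)\{(1-\lambda)(n-1)+2\}$, and read off the two values of $\lambda$ that kill the denominator. The only extra detail you spell out is the $b^{i+j-2}=b^{i+j}$ collapse, which the paper leaves implicit.
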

\begin{proof}
From Lemma \ref{thm_invOmega}, if $c=2$, $\beta_i = U_i(1) - \lambda U_{i-1}(1) = 1+(1-\lambda)i$ for $i=0, \dots, n-1$. $d\beta_{n-1} - \beta_{n-2} = (1-\lambda)\{(1-\lambda)(n-1) + 2\}$ and $Q^{-1}$ is undefined if the denominator is zero.
\end{proof}

\begin{corollary}\label{cor_cnot2}
If $c \neq 2$, $Q^{-1}_{ij} = u_i v_j$ for $i \leq j$ where 
\begin{equation*}
v_i = b^{i-1} {\kappa_{n-i}}/{\kappa}, \quad
u_i = b^{i-1} {\kappa_{i-1}}/{\kappa_0} \quad (i=1, \dots, n).
\end{equation*}
\end{corollary}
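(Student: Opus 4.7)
The plan is to obtain Corollary \ref{cor_cnot2} by substituting the identity $\beta_i = \kappa_i/\kappa_0$ (established in Lemma \ref{lemma_de} for the case $c \neq 2$) into the expressions for $u_i$ and $v_i$ given in Theorem \ref{thm_invOmega}. The $u_i$ formula follows immediately, since $u_i = b^{i-1}\beta_{i-1} = b^{i-1}\kappa_{i-1}/\kappa_0$. The $v_i$ formula requires a little more work: writing numerator and denominator of $v_i = b^{i-1}\beta_{n-i}/(d\beta_{n-1} - \beta_{n-2})$ in terms of $\kappa_j$ cancels a factor of $\kappa_0$ and leaves
\[
v_i = \frac{b^{i-1}\kappa_{n-i}}{d\kappa_{n-1} - \kappa_{n-2}}.
\]
So the only non-routine step is to verify the identity $d\kappa_{n-1} - \kappa_{n-2} = \kappa$.

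To establish that identity I would expand using Definition \ref{def1}:
\[
d\kappa_{n-1} - \kappa_{n-2} = \vp\rpp^{n-2}(d\rpp - 1) - \vm\rmm^{n-2}(d\rmm - 1),
\]
and then simplify each bracketed factor using $\rpp\rmm = 1$ and $\rpp + \rmm = c$. Specifically, $d\rpp - 1 = d\rpp - \rpp\rmm = \rpp(d - \rmm)$, and since $d - \rmm = (c - \lambda) - \rmm = \rpp - \lambda = \vp$, I get $d\rpp - 1 = \rpp\vp$. The symmetric computation gives $d\rmm - 1 = \rmm\vm$. Substituting back yields
\[
d\kappa_{n-1} - \kappa_{n-2} = \vp^2\rpp^{n-1} - \vm^2\rmm^{n-1} = \kappa,
\]
which is exactly the definition of $\kappa$.

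The main (mild) obstacle is the algebraic identity $d\rpp - 1 = \rpp\vp$; everything else is direct substitution. Once this identity is in hand, assembling the proof is a matter of one or two lines applying Lemma \ref{lemma_de} and Theorem \ref{thm_invOmega}, and no case splitting on the sign of $c - 2$ is needed since we have already restricted to $c \neq 2$.
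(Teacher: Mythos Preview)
Your proposal is correct and follows essentially the same route as the paper: substitute $\beta_i=\kappa_i/\kappa_0$ from Lemma~\ref{lemma_de} into Theorem~\ref{thm_invOmega} and verify the single identity $d\kappa_{n-1}-\kappa_{n-2}=\kappa$. The only cosmetic difference is in how that identity is checked---the paper expands $d=\rpp+\rmm-\lambda$ directly in the product, whereas you factor out $\rpp^{n-2}$ and $\rmm^{n-2}$ first and reduce to $d\rpp-1=\rpp\vp$; both are equivalent one-line computations.
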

\begin{proof}
The result follows from Theorem \ref{thm_invOmega} and Lemma \ref{lemma_de} since $\beta_i = \kappa_i/\kappa_0$ and 
\begin{equation*}
\begin{aligned}
\kappa_0 (d\beta_{n-1} - \beta_{n-2}) &= d\kappa_{n-1} - \kappa_{n-2} \\
& = (\rpp + \rmm - \lambda) ( \vp\rpp^{n-1} - \vm \rmm^{n-1}) - ( \vp\rpp^{n-2} - \vm \rmm^{n-2}) \\
&= \vp^2 \rpp^{n-1} - \vm^2 \rmm^{n-1} = \kappa.
\end{aligned}
\end{equation*}
\\[-13mm]
\end{proof}

\begin{corollary} \label{cor3}
For $i=1, \dots, n$, $u_iv_n = v_{n-i+1}$.
\end{corollary}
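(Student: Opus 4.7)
The plan is to prove this by direct substitution from the formulas for $u_i$ and $v_j$ given in \eqref{Qinv} of Theorem \ref{thm_invOmega}, with a single algebraic observation at the end to reconcile the powers of $b$.

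First I would write out $v_n$ explicitly: using $\beta_0 = 1$ (the initial condition in Lemma \ref{lemma_de}), we have
\begin{equation*}
v_n = \frac{b^{n-1} \beta_0}{d\beta_{n-1} - \beta_{n-2}} = \frac{b^{n-1}}{d\beta_{n-1} - \beta_{n-2}}.
\end{equation*}
Multiplying by $u_i = b^{i-1} \beta_{i-1}$ gives
\begin{equation*}
u_i v_n = \frac{b^{n+i-2} \, \beta_{i-1}}{d\beta_{n-1} - \beta_{n-2}}.
\end{equation*}
On the other hand, the formula for $v_{n-i+1}$ from \eqref{Qinv} reads
\begin{equation*}
v_{n-i+1} = \frac{b^{n-i} \, \beta_{i-1}}{d\beta_{n-1} - \beta_{n-2}}.
\end{equation*}

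The only remaining point is to match the exponents of $b$. The difference $(n+i-2) - (n-i) = 2(i-1)$ is even, and since $b \in \{-1, 1\}$ by assumption, $b^{2(i-1)} = 1$, so $b^{n+i-2} = b^{n-i}$. This yields $u_i v_n = v_{n-i+1}$, as required. There is no real obstacle here; the statement is essentially a bookkeeping identity that exposes the symmetry $Q^{-1}_{i,n} = Q^{-1}_{n-i+1, n-i+1} \cdot (\,\cdot\,)$-type relation implicit in the $u v^{\mathrm{T}}$ factorization, with the parity check on $b$ being the only step worth mentioning explicitly.
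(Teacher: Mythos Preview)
Your proof is correct. The paper states Corollary~\ref{cor3} without proof, presumably regarding it as an immediate consequence of the formulas in \eqref{Qinv}; your direct substitution together with the parity observation $b^{2(i-1)}=1$ is exactly the intended verification.
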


\begin{property} \label{kappatve}
If $c > 2$ and $d >1$, the sequence $\{\kappa_i|i=0, \dots, n-1\}$ is positive and strictly increasing. That is, $0 < \kappa_0 < \kappa_1 < \dots < \kappa_{n-1}$. In addition, $\kappa$ is positive.
\end{property}
\begin{proof}
First we show that $\delta_i=\beta_i/\beta_{i-1}>1$ for $i=1, \dots, n-1$. We have $\delta_1 = d > 1$. If $\delta_{i-1} > 1$, then $\delta_i = c - 1/\delta_{i-1} > c-1 > 1$ for $i=2, \dots, n-1$. By induction, $\delta_i > 1$ for $i=1, \dots, n-1$. Since $\delta_i = \beta_i/\beta_{i-1} = \kappa_i/\kappa_{i-1}$, we have $\kappa_i > \kappa_{i-1}$ for $i=1, \dots, n-1$. Note that $\kappa_0 = \rpp - \rmm > 0$ and $\kappa = d \kappa_{n-1} - \kappa_{n-2} > \kappa_{n-1} - \kappa_{n-2} > 0$. 
\end{proof}

\begin{property} \label{uvtve}
If $c > 2$, $d >1$ and $b=1$, 
\begin{enumerate}[(i)]
\item the sequence $\{u_i|i=1, \dots, n\}$ is positive and strictly increasing,
\item the sequence $\{v_i|i=1, \dots, n\}$ is positive and strictly decreasing,
\item all elements of $Q^{-1}$ are positive.
\end{enumerate}
\end{property}
\begin{proof}
If $b=1$, $u_i = \kappa_{i-1}/\kappa_0> 0$ and $v_i = \kappa_{n-i}/\kappa > 0$ for $i=1, \dots, n$. Thus the results (i) and (ii) follow from Property \ref{kappatve}. For (iii), we also have $Q_{ij}^{-1} = u_iv_j > 0$ for $i \leq j$ and $Q^{-1}$ is symmetric.
\end{proof}

\begin{property} \label{kappadiff}
If $c> 2$, then $\kappa_i - \kappa_{i-1} =  \kappa_1-\kappa_0 + (c-2) \sum_{j=1}^{i-1} \kappa_j$ for $i=1, \dots, n-1$. Thus,
\begin{equation*}
\kappa_{n-i-1} +\kappa_{i} < \kappa_{n-i} + \kappa_{i-1}, \quad 
(i=1, \dots, \lceil n/2 \rceil - 1).
\end{equation*} 
\end{property}
\begin{proof}
Using the recurrence relation $\kappa_i = c\kappa_{i-1} - \kappa_{i-2}$ for $i=2, \dots, n-1$, we have
\begin{equation*}
\begin{aligned}
\kappa_i - \kappa_{i-1} &= \kappa_{i-1} - \kappa_{i-2} + (c-2) \kappa_{i-1} \\
&=  \kappa_{i-2} - \kappa_{i-3} + (c-2) (\kappa_{i-1} + \kappa_{i-2}) \\
&= \cdots  = \kappa_1 - \kappa_0 + (c-2) \sum_{j=1}^{i-1} \kappa_j.
\end{aligned}
\end{equation*}
When $i=1$, the above equality holds trivially. Hence, for $i=1, \dots,  \lceil n/2 \rceil - 1$.
\begin{equation*}
\kappa_{n-i} - \kappa_{n-i-1} - ( \kappa_i - \kappa_{i-1} )
= (c-2) \sum_{j=i}^{n-i-1} \kappa_j > 0.
\qedhere
\end{equation*}
\end{proof}

\section{Row sums of the inverse}
In this section, we derive expressions for the row sums of $Q^{-1}$. 

\begin{theorem}\label{thm_rowsum}
Let $s_i$ denote the sum of the $i$th row of $Q^{-1}$. 
\begin{equation*}
s_i = 
\begin{cases}
\dfrac{(n-1)u_i v_i + i v_i + (n-i+1) v_{n-i+1} }{2} & \text{ if } c=2 \text{ and } b=1, \\
\dfrac{ 1-(b-\lambda)(v_i + v_{n-i+1}) }{c-2b} & \text{ if } c=2 \text{ and } b=-1, \text{ or } c \neq 2.
\end{cases}
\end{equation*}
\end{theorem}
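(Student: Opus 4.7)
The plan is to identify the row-sum vector $s = (s_1, \dots, s_n)^T$ with the solution of the linear system $Qs = \mathbf{1}$, since $s = Q^{-1}\mathbf{1}$. Expanding $Q$, this becomes the second-order linear difference equation
\begin{equation*}
-b s_{i-1} + c s_i - b s_{i+1} = 1 \quad (i = 2, \dots, n-1),
\end{equation*}
subject to the boundary conditions $d s_1 - b s_2 = 1$ and $-b s_{n-1} + d s_n = 1$. Because $Q$ is invariant under the involution $i \mapsto n-i+1$ and $\mathbf{1}$ is fixed by it, the unique solution also satisfies $s_i = s_{n-i+1}$.

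For the generic case $c \neq 2b$, which covers $c \neq 2$ as well as $c = 2$ with $b = -1$, I would exhibit the candidate
\begin{equation*}
w_i = \frac{1 - (b - \lambda)(v_i + v_{n-i+1})}{c - 2b}
\end{equation*}
and verify $Qw = \mathbf{1}$ directly. The key observation is that $v_j = b^{j-1}\beta_{n-j}/(d\beta_{n-1} - \beta_{n-2})$ inherits the homogeneous recurrence $-b v_{j-1} + c v_j - b v_{j+1} = 0$ for $2 \leq j \leq n-1$, which follows from $c \beta_k = \beta_{k+1} + \beta_{k-1}$ (Lemma \ref{lemma_de}) together with $b^2 = 1$. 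Hence for every interior $i$ both symmetric contributions in $(Qw)_i$ vanish and $(Qw)_i$ collapses to the constant part $(c - 2b)/(c-2b) = 1$. For the row-$1$ equation I would verify the two elementary identities $dv_1 - bv_2 = (d\beta_{n-1} - \beta_{n-2})/(d\beta_{n-1} - \beta_{n-2}) = 1$ and $dv_n - bv_{n-1} = 0$, and combine them with $\lambda = c - d$ to obtain $dw_1 - bw_2 = 1$; the row-$n$ equation is then automatic by the symmetry $w_i = w_{n-i+1}$.

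For the exceptional case $c = 2$, $b = 1$, the constant ansatz breaks down since $c - 2b = 0$, so a quadratic particular solution is required. A brief check shows that $p_i = i(n-i+1)/2$ satisfies $-p_{i-1} + 2 p_i - p_{i+1} = 1$ and is already symmetric, so the general symmetric solution must take the form $s_i = i(n-i+1)/2 + C$ with $C$ fixed by the single boundary condition $d s_1 - s_2 = 1$. I would then substitute the explicit forms $\beta_{i-1} = 1 + (1-\lambda)(i-1)$ and $\beta_{n-i} = 1 + (1-\lambda)(n-i)$ from Corollary \ref{cor_c2} into the theorem's formula, expand the numerator as a polynomial in $1-\lambda$, and verify that the result coincides with $i(n-i+1)/2 + C$ by matching the constant, linear and quadratic coefficients.

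The main obstacle I anticipate lies in this matching step, since the formula in the exceptional case is not a pure combination of $v_i$ and $v_{n-i+1}$ but also involves the diagonal term $u_iv_i$, producing a genuinely quadratic-in-$i$ expression. The verification hinges on the identity $(i-1)(n-i) = i(n-i+1) - n$, which appears naturally when reducing the coefficient of $(1-\lambda)^2$. In contrast, the generic case reduces cleanly to the two boundary identities for $v_i$ displayed above, and the interior equations are handled uniformly by the recurrence.
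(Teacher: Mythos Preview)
Your approach is correct and genuinely different from the paper's. The paper proves the theorem by direct summation: it writes $s_i = v_i\sum_{j<i}u_j + u_i\sum_{j\ge i}v_j$ and evaluates each finite sum explicitly, using arithmetic series when $c=2$, the alternating-sum identity \eqref{alternating} when $c=2,\,b=-1$, and geometric series in $r_{\pm}$ when $c\neq 2$, followed by algebraic simplification of the resulting $\kappa$-expressions. Your route instead exploits $Qs=\mathbf{1}$ and verifies that the stated formula is the unique solution: in the generic case this reduces cleanly to the two boundary identities $dv_1-bv_2=1$ and $dv_n-bv_{n-1}=0$ (which you check correctly), while the interior rows vanish automatically because both $v_i$ and $v_{n-i+1}$ satisfy the homogeneous three-term recurrence. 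This is more structural and avoids all series manipulation, at the price of being a verification rather than a derivation. For the exceptional case $c=2,\,b=1$ the paper again sums directly, while you argue via the general symmetric solution $i(n-i+1)/2+C$; both work, though your matching step is somewhat heavier here than the paper's two-line arithmetic-series computation. Either approach is perfectly acceptable.
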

\begin{proof}
We have $s_i =  v_i \sum_{j=1}^{i-1}  u_j + u_i \sum_{j=i}^{n}  v_j $. If $c=2$ and $b=1$, 
\begin{equation*}
\begin{aligned}
s_i &= v_i \sum_{j=1}^{i-1} \{1+(1-\lambda)(j-1) \} + \frac{u_i}{d\beta_{n-1} - \beta_{n-2}} \sum_{j=i}^n \{ 1+(1-\lambda)(n-j)\}  \\
&= \frac{v_i (i-1)(\lambda + u_i)}{2} + \frac{u_i (n-i+1)(v_n + v_i)}{2}  \\
&= \{nu_i v_i + v_i(i-u_i)+ (n-i+1)v_{n-i+1} \}/2.
\end{aligned}
\end{equation*}
If $c=2$ and $b=-1$,
\begin{equation*}
\begin{aligned}
s_i &= v_i \sum_{j=1}^{i-1} (-1)^{j-1}\{1+(1-\lambda)(j-1) \} + \frac{u_i}{d\beta_{n-1} - \beta_{n-2}} \sum_{j=i}^n  (-1)^{j-1}\{ 1+(1-\lambda)(n-j)\} \\
&= \frac{v_i(1+\lambda) - 2u_i v_i - v_i(-1)^{i}(1-\lambda)}{4} + \frac{u_iv_n (1+\lambda) + 2u_i v_i + (-1)^{n-i} (1-\lambda) u_i v_n }{4} \\
&= \{1 + (1+\lambda)(v_i + v_{n-i+1}) \}/4.
\end{aligned}
\end{equation*}
Note that $\sum_{j=1}^n (-1)^j j$ can be evaluated using the result:
\begin{equation} \label{alternating}
\sum_{j=1}^n jk^j = \frac{k\{1 - (n+1) k^n + nk^{n+1}\}}{(1-k)^{2}} .
\end{equation}
Finally if $c \neq 2$, 
\begin{equation*}
\begin{aligned}
s_i &=\frac{v_i}{\kappa_{0}} \sum_{j=1}^{i-1}  b^{j-1} \kappa_{j-1}  + \frac{u_i}{\kappa}  \sum_{j=i}^{n}  b^{j-1} \kappa_{n-j} \\
&=\frac{v_i}{\kappa_{0}} \left\{ \vp \sum_{j=1}^{i-1} (b\rpp)^{j-1} - \vm  \sum_{j=1}^{i-1} (b\rmm)^{j-1} \right\} + \frac{u_i}{\kappa} \left\{   \vp \sum_{j=i}^n b^{j-1} \rpp^{n-j} - \vm \sum_{j=i}^n b^{j-1} \rmm^{n-j} \right\}\\
&= \frac{v_i}{\kappa_{0}}  \frac{b^{i}\kappa_{i-2} - b^{i-1} \kappa_{i-1} + (1- b\lambda) \kappa_0}{2-bc} +
\frac{u_i}{\kappa}  \frac{ b^{i+1} \kappa_{n-i} - b^{i} \kappa_{n+1-i} + (1 - b\lambda) b^{n-1}\kappa_0}{2-bc} \\
&= \frac{b (\kappa_{n-1} \kappa_{i-2} - \kappa_{n-i+1} \kappa_{i-1})} {\kappa_0 \kappa (2-bc)}  +  \frac{(1-b\lambda) (v_i + u_i v_n)}{2-bc} \\
&= \frac{-b  +  (1-b\lambda) (v_i + v_{n-i+1})}{2-bc} = \frac{ 1- (b-\lambda)(v_i + v_{n-i+1}) }{c-2b}.
\end{aligned}
\end{equation*}
\\[-12mm]
\end{proof}

\section{Trace of the inverse}
In this section, we derive the trace of $Q^{-1}$ and $Q^{-2}$. We focus on the case where $c \neq 2$. These closed expressions are particularly useful as the order of $Q$ increases. We also present some results on the limiting behavior of $\tr(Q^{-1})$ and $\tr(Q^{-2})$ as $n \rightarrow \infty$. 

\begin{definition}
Let $\rho = \vp \vm = (\rpp-\lambda)(\rmm-\lambda) = 1-c\lambda + \lambda^2$. 
\end{definition}

\begin{theorem} \label{thm_trace}
If $c\neq 2$, 
\begin{equation*}
\tr(Q^{-1})  = {n  (\vp^2\rpp^{n-1} + \vm^2 \rmm^{n-1})  }/{(\kappa_0 \kappa)} - {2\rho  (\rpp^n - \rmm^n)}/{(\kappa_0^2 \kappa)}.
\end{equation*}
\end{theorem}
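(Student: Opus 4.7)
The plan is to start from the diagonal-element formula in Corollary \ref{cor_cnot2}, sum it, and then evaluate the resulting convolution-type sum in closed form.

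First I would use Corollary \ref{cor_cnot2} to write $Q^{-1}_{ii} = u_i v_i = b^{2(i-1)} \kappa_{i-1} \kappa_{n-i}/(\kappa_0 \kappa)$. Since $b = \pm 1$, $b^{2(i-1)} = 1$, so
\begin{equation*}
\tr(Q^{-1}) = \frac{1}{\kappa_0 \kappa}\sum_{i=1}^n \kappa_{i-1}\kappa_{n-i} = \frac{1}{\kappa_0 \kappa}\sum_{j=0}^{n-1} \kappa_j\kappa_{n-1-j}.
\end{equation*}
The entire problem is now to evaluate the convolution $S_n := \sum_{j=0}^{n-1} \kappa_j \kappa_{n-1-j}$ explicitly.

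Next I would substitute $\kappa_i = \vp \rpp^i - \vm \rmm^i$ and multiply out. Exploiting $\rpp \rmm = 1$, the pure powers $\vp^2 \rpp^{n-1}$ and $\vm^2 \rmm^{n-1}$ are constant in $j$ and contribute $n(\vp^2 \rpp^{n-1} + \vm^2 \rmm^{n-1})$ to $S_n$. The two cross terms combine, via $\vp\vm = \rho$ and $\rpp^{j}\rmm^{n-1-j} = \rpp^{2j-n+1}$, $\rmm^j \rpp^{n-1-j} = \rmm^{2j-n+1}$, into $-\rho \sum_{j=0}^{n-1}(\rpp^{2j-n+1} + \rmm^{2j-n+1})$.

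Then I would recognize each of these two sums as a geometric series. For instance,
\begin{equation*}
\sum_{j=0}^{n-1} \rpp^{2j-n+1} = \rpp^{-(n-1)} \frac{\rpp^{2n}-1}{\rpp^2 - 1} = \frac{\rpp^n - \rpp^{-n}}{\rpp - \rpp^{-1}} = \frac{\rpp^n - \rmm^n}{\kappa_0},
\end{equation*}
and by symmetry the $\rmm$-sum gives exactly the same value. Putting everything together yields $S_n = n(\vp^2 \rpp^{n-1} + \vm^2 \rmm^{n-1}) - 2\rho (\rpp^n - \rmm^n)/\kappa_0$, and dividing by $\kappa_0 \kappa$ produces the claimed formula.

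The only mildly delicate step is the bookkeeping in the geometric-series simplification, specifically keeping track of signs when $c < 2$ (so that $\rpp, \rmm$ are complex conjugates on the unit circle and $\kappa_0$ is purely imaginary); however, the algebraic identity $\rpp \rmm = 1$ makes every step formally valid without a case split, so this is not a real obstacle. No new machinery beyond Corollary \ref{cor_cnot2} and elementary summation is required.
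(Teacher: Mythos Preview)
Your proposal is correct and follows essentially the same approach as the paper's own proof: both reduce $\tr(Q^{-1})$ to $(\kappa_0\kappa)^{-1}\sum \kappa_{i-1}\kappa_{n-i}$, expand via $\kappa_i=\vp\rpp^i-\vm\rmm^i$, and close the cross terms with the same geometric-series identity $\sum \rpp^{2j-n+1}=(\rpp^n-\rmm^n)/\kappa_0$. The only cosmetic difference is that the paper argues the two cross sums coincide by observing that the exponents form the same arithmetic progression, whereas you evaluate each and appeal to symmetry.
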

\begin{proof}
We have $\tr(Q^{-1}) = \sum_{i=1}^n u_iv_i = (\kappa_0 \kappa)^{-1} \sum_{i=1}^n \kappa_{i-1} \kappa_{n-i}$ and 
\begin{equation*}
\begin{aligned}
\sum_{i=1}^n \kappa_{i-1} \kappa_{n-i} 
&= \sum_{i=1}^n  (\vp \rpp^{i-1} - \vm \rmm^{i-1}) (\vp \rpp^{n-i} - \vm\rmm^{n-i}) \\
&= n (\vp^2\rpp^{n-1} + \vm^2 \rmm^{n-1}) - \rho \sum_{i=1}^n ( \rpp^{i-1} \rmm^{n-i} + \rmm^{i-1}\rpp^{n-i}) \\
&= n (\vp^2\rpp^{n-1} + \vm^2 \rmm^{n-1}) - 2\rho \sum_{i=1}^n \rpp^{2i - n -1}. 
\end{aligned}
\end{equation*}
As $\rpp\rmm = 1$, $\sum_{i=1}^n \rpp^{i-1} \rmm^{n-i} = \sum_{i=1}^n \rpp^{2i-n-1}$ and $\sum_{i=1}^n \rmm^{i-1}\rpp^{n-i}  =  \sum_{i=1}^n \rpp^{n -2i+1}$. These two sums are equal as the powers in both cases form an arithmetic progression from $1-n$ to $n-1$ with a difference of 2. Finally,
\begin{equation*}
 \sum_{i=1}^n \rpp^{2i - n -1} = \frac{ \rpp^{1-n}(\rpp^{2n} -1)}{\rpp^2 -1}  = \frac{ \rpp^{n} -\rpp^{-n}}{\rpp - \rmm} = \frac{\rpp^n - \rmm^n}{\kappa_0} .
\qedhere
\end{equation*}
\end{proof}

\begin{remark}
If $c=2$, it is straightforward to show that 
\begin{equation*}
\tr(Q^{-1}) = \frac{n\{ n-n\lambda + \lambda + (1-\lambda)^2(n-1)(n-2)/6\}}{(1-\lambda)\{(1-\lambda)(n-1) + 2\}}.
\end{equation*}
\end{remark}

\begin{corollary} \label{cor4}
If $c > 2$, then $\lim_{n\rightarrow \infty} n^{-1} \tr(Q^{-1}) = \kappa_0^{-1}$.
\end{corollary}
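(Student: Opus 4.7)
The plan is to apply Theorem \ref{thm_trace} directly and take the limit, with the key observation that when $c>2$ the roots $\rpp, \rmm$ of the characteristic equation are real with $\rpp > 1 > \rmm > 0$ (this follows from $\rpp+\rmm = c > 2$ and $\rpp\rmm = 1$). This means $\rpp^n \to \infty$ while $\rmm^n \to 0$ geometrically fast, so $\rpp^n$ is the dominant quantity in every expression.

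First, I would divide the formula in Theorem \ref{thm_trace} by $n$ to obtain
\begin{equation*}
\frac{\tr(Q^{-1})}{n} = \frac{\vp^2 \rpp^{n-1} + \vm^2 \rmm^{n-1}}{\kappa_0 \, \kappa} - \frac{2\rho \,(\rpp^n - \rmm^n)}{n\,\kappa_0^2 \,\kappa}.
\end{equation*}
For the first term, I would factor $\vp^2 \rpp^{n-1}$ out of both numerator and $\kappa = \vp^2 \rpp^{n-1} - \vm^2 \rmm^{n-1}$, reducing the fraction to
\begin{equation*}
\frac{1}{\kappa_0}\cdot\frac{1 + (\vm/\vp)^2 (\rmm/\rpp)^{n-1}}{1 - (\vm/\vp)^2 (\rmm/\rpp)^{n-1}}.
\end{equation*}
Since $\rmm/\rpp = \rmm^2 < 1$, the ratio $(\rmm/\rpp)^{n-1} \to 0$, so this term tends to $\kappa_0^{-1}$. (I should note that $\vp \neq 0$ under the hypothesis: $\vp = \rpp - \lambda = 0$ would force $\lambda = \rpp > 1$, but this does not automatically contradict anything in this corollary's hypothesis, so I may need to handle that edge case, or invoke the implicit assumption that $Q^{-1}$ exists so $\kappa \neq 0$; under $\vp = 0$ the formula in Theorem \ref{thm_trace} should be re-examined.)

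For the second term, the same factorization gives
\begin{equation*}
\frac{2\rho\,(\rpp^n - \rmm^n)}{n\,\kappa_0^2\,\kappa} = \frac{2\rho}{n\,\kappa_0^2} \cdot \frac{\rpp - \rpp^{1-2n}\rmm^{-0}\,\text{(small terms)}}{\vp^2}\!,
\end{equation*}
or more cleanly, bound the absolute value by $C/n$ for some constant $C$ depending only on $\rpp$, $\vp$, $\rho$, $\kappa_0$. Since the factor in front of $1/n$ is bounded (the leading behavior of $(\rpp^n - \rmm^n)/\kappa$ is $\rpp/\vp^2$, a finite constant), this term vanishes as $n\to\infty$.

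I do not expect any real obstacle here, as everything reduces to elementary asymptotics of geometric sequences once Theorem \ref{thm_trace} is in hand. The only subtlety worth checking is the non-vanishing of $\vp$ (so that the factorization is legitimate) and the non-vanishing of $\kappa$ for all sufficiently large $n$ (which follows from $\kappa \sim \vp^2 \rpp^{n-1}$). Combining the two limits yields $\lim_{n\to\infty} n^{-1}\tr(Q^{-1}) = \kappa_0^{-1}$.
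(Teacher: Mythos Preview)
Your approach is essentially identical to the paper's: divide the formula of Theorem~\ref{thm_trace} by $n$, factor $\vp^2\rpp^{n-1}$ out of both numerator and $\kappa$, and use $\rmm^{n}\to 0$ (since $\rpp>1>\rmm>0$) to see that the first term tends to $\kappa_0^{-1}$ while the second is $O(1/n)$. Your caution about the edge case $\vp=0$ is well-placed and in fact goes beyond the paper, which tacitly assumes $\vp\neq 0$ as well; note that $\vp = \rpp - \lambda = d - \rmm$, so $\vp>0$ whenever $d>1$, which covers all the applications considered.
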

\begin{proof}
If $c > 2$, then $r_{\pm}$ are real and $\rpp > 1$ which implies $0 < \rmm < 1$ since $\rpp\rmm = 1$. Thus $\lim_{n \rightarrow \infty} \rmm^n = 0$. From Theorem \ref{thm_trace}, $\lim_{n\rightarrow \infty} n^{-1} \tr(Q^{-1})$ equals
\begin{equation*}
\begin{aligned}
\frac{1}{\kappa_0}\lim_{n\rightarrow \infty} \frac{1+ \vm^2 \vp^{-2} \rmm^{2n-2}  }{1 - \vm^2 \vp^{-2} \rmm^{2n-2} } - \frac{2\rho}{\kappa_0^2} \lim_{n\rightarrow \infty} \frac{ 1 - \rmm^{2n} }{n\{\vp^2\rmm - \vm^2 \rmm^{2n-1}\}}.
\end{aligned}
\end{equation*}
The left term approaches $\kappa_0^{-1}$, while the right term goes to zero. 
\end{proof}

\begin{lemma}
\begin{equation*}
\zeta_j = \sum_{i=0}^{j-1} \kappa_{i}^2 =  \lambda^2 -1 - 2 \rho j + \frac{\vp^2 \rpp^{2j-1} - \vm^2 \rmm^{2j-1}}{\kappa_0}
\end{equation*}
\end{lemma}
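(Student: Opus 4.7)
The plan is to obtain the claim by direct expansion of $\kappa_i^2$ followed by two geometric summations, and then to verify that the leftover constants collapse to $\lambda^2 - 1$. First I would write $\kappa_i^2 = (\vp \rpp^i - \vm \rmm^i)^2 = \vp^2 \rpp^{2i} + \vm^2 \rmm^{2i} - 2\vp\vm (\rpp \rmm)^i$. Using the two facts already noted just after Definition \ref{def1}, namely $\rpp \rmm = 1$ and $\rho = \vp \vm$, this reduces to $\kappa_i^2 = \vp^2 \rpp^{2i} + \vm^2 \rmm^{2i} - 2\rho$. Summing $i$ from $0$ to $j-1$ splits $\zeta_j$ into two geometric series in $\rpp^2$ and $\rmm^2$, plus the term $-2\rho j$ that appears in the claimed formula.

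The next step is to massage the geometric sums so that $\kappa_0 = \rpp - \rmm$ appears naturally as the denominator. I would use the standard formula $\sum_{i=0}^{j-1} \rpp^{2i} = (\rpp^{2j}-1)/(\rpp^2-1)$ and then multiply top and bottom by $\rpp^{-1}$ to obtain $(\rpp^{2j-1} - \rmm)/\kappa_0$, and similarly $\sum_{i=0}^{j-1} \rmm^{2i} = (\rpp - \rmm^{2j-1})/\kappa_0$. Combining these over the common denominator $\kappa_0$ gives
\begin{equation*}
\zeta_j = -2\rho j + \frac{\vp^2 \rpp^{2j-1} - \vm^2 \rmm^{2j-1}}{\kappa_0} + \frac{\vm^2 \rpp - \vp^2 \rmm}{\kappa_0},
\end{equation*}
which already has the two shape-giving terms of the claim and one extra constant to identify.

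The remaining task, and the only place that requires real computation, is to show $(\vm^2 \rpp - \vp^2 \rmm)/\kappa_0 = \lambda^2 - 1$. I expect this to fall out cleanly by substituting $\vp = \rpp - \lambda$ and $\vm = \rmm - \lambda$, expanding, and noticing that the $\lambda$-linear terms cancel via $\rpp \rmm = 1$ (here the common factor is $2\lambda \rpp \rmm = 2\lambda$, which subtracts out when forming $\vm^2 \rpp - \vp^2 \rmm$). What remains factors as $(\rpp - \rmm)(\lambda^2 - \rpp \rmm) = \kappa_0(\lambda^2 - 1)$, delivering exactly the constant needed to match the statement.

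There is no serious obstacle; the only subtlety is choosing the right presentation of the geometric sums so that $\kappa_0$ appears in the denominator rather than $\rpp^2 - 1$, which is what makes the final identification transparent. The crucial small identity is $\vm^2 \rpp - \vp^2 \rmm = \kappa_0(\lambda^2 - 1)$, and once it is in hand the lemma follows by assembling the three summands above.
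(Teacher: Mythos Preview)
Your proposal is correct and follows essentially the same route as the paper: expand $\kappa_i^2$, sum the two geometric series with $\kappa_0$ as denominator, and invoke the identity $\vm^2\rpp-\vp^2\rmm=\kappa_0(\lambda^2-1)$ to identify the constant. The only cosmetic difference is that the paper states this small identity at the outset rather than deriving it at the end.
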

\begin{proof}
Since $\vp \vm = \rho$, $\rpp-\rmm = \kappa_0$ and $\vm^2 \rpp - \vp^2 \rmm = \kappa_0 (\lambda^2-1)$,
\begin{equation*}
\begin{aligned}
\zeta_j &= \sum_{i=0}^{j-1}  (\vp \rpp^i - \vm\rmm^i)^2  = -2\rho j  + \vp^2 \sum_{i=0}^{j-1}  \rpp^{2i} + \vm^2  \sum_{i=0}^{j-1}  \rmm^{2i} \\
& = -2\rho j  +  \{\vp^2(\rpp^{2j-1} - \rmm) - \vm^2 (\rmm^{2j-1} - \rpp) \}/{\kappa_0} \\
& = \lambda^2 -1 -2\rho j  + ({\vp^2 \rpp^{2j-1} - \vm^2 \rmm^{2j-1})}/{\kappa_0}.
\end{aligned}
\end{equation*}
\\[-12mm]
\end{proof}

\begin{theorem} \label{thm_trace2}
If $c\neq 2$, then $\tr(Q^{-2}) = (\kappa_0^2 \kappa^2)^{-1} \mathcal{S}$, where
\begin{equation*}
\begin{aligned}
\mathcal{S} &= 4\rho^2 n^2 -8\rho n (\lambda^2-1) + 4\rho(1 + \lambda^2) + {nc(\vp^4 \rpp^{2n-2} - \vm^4\rmm^{2n-2})}/{\kappa_0} + 2(\lambda^2 -1)^2\\
& \quad  + {16\rho^2}/{\kappa_0^2}
- {4\rho c( \vp^2\rpp^{2n-1} +\vm^2\rmm^{2n-1})}/{\kappa_0^2}  
+ {2(\lambda^2-1)(\vp^2 \rpp^{2n-1} - \vm^2 \rmm^{2n-1} )}/{\kappa_0} .
\end{aligned}
\end{equation*}
\end{theorem}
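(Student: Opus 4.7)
The plan is to write $\tr(Q^{-2}) = \|Q^{-1}\|_F^2 = \sum_{i,j=1}^n (Q^{-1}_{ij})^2$ (using symmetry of $Q^{-1}$) and exploit the factorization $Q^{-1}_{ij} = u_iv_j$ for $i\leq j$ from Theorem~\ref{thm_invOmega} and Corollary~\ref{cor_cnot2}. On squaring, the sign factors $b^{i-1},b^{j-1}$ disappear and $(Q^{-1}_{ij})^2 = \kappa_{\min(i,j)-1}^2\kappa_{n-\max(i,j)}^2/(\kappa_0^2\kappa^2)$. Splitting into diagonal and strictly upper-triangular contributions yields
\[
\kappa_0^2\kappa^2\, \tr(Q^{-2}) = \sum_{i=1}^n \kappa_{i-1}^2\kappa_{n-i}^2 + 2\sum_{i=1}^{n-1}\kappa_{i-1}^2\sum_{j=i+1}^n \kappa_{n-j}^2,
\]
and the substitution $k=n-j$ converts the innermost sum into $\sum_{k=0}^{n-i-1}\kappa_k^2 = \zeta_{n-i}$, for which the preceding lemma supplies a closed form.

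I would then substitute this closed form for $\zeta_{n-i}$ and expand every occurrence of $\kappa_{i-1}^2$ via $\kappa_{i-1}^2 = \vp^2\rpp^{2i-2} + \vm^2\rmm^{2i-2} - 2\rho$ (the cross term equals $-2\vp\vm\rpp^{i-1}\rmm^{i-1} = -2\rho$ because $\vp\vm = \rho$ and $\rpp\rmm=1$). The right-hand side then reduces to a finite linear combination of constants in $i$, geometric sums of ratio $\rpp^{\pm 2}$ or $\rpp^{\pm 4}$, and arithmetic--geometric sums $\sum_i i\rpp^{2i-2}$ and $\sum_i i\rmm^{2i-2}$ arising from $\sum_i(n-i)\kappa_{i-1}^2$. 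Each sum has a clean closed form thanks to $\rpp^2-1 = \rpp\kappa_0$, $\rmm^2-1 = -\rmm\kappa_0$, and $\rpp^4-1 = c\kappa_0\rpp^2$ (the last because $\rpp^2+1 = \rpp(\rpp+\rmm) = c\rpp$). In particular the cross term $\vp^2\vm^2\rpp^{2i-2}\rmm^{2n-2i} = \rho^2\rpp^{4i-2n-2}$ in $\kappa_{i-1}^2\kappa_{n-i}^2$ sums to $2\rho^2(\rpp^{2n}-\rmm^{2n})/(c\kappa_0)$.

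Collection then proceeds term by term. The quadratic piece $4\rho^2 n^2$ arises from $-4\rho\cdot(-2\rho)\sum_{i=1}^{n-1}(n-i) = 4\rho^2 n(n-1)$ corrected by the $+4\rho^2 n$ contribution of the constant-in-$i$ summand $4\rho^2$ of $\kappa_{i-1}^2\kappa_{n-i}^2$. The linear-in-$n$ piece $nc(\vp^4\rpp^{2n-2}-\vm^4\rmm^{2n-2})/\kappa_0$ materializes when the constant-in-$i$ pieces $n\vp^4\rpp^{2n-2}, n\vm^4\rmm^{2n-2}$ of $\kappa_{i-1}^2\kappa_{n-i}^2$ are combined with the $\vp^4\rpp^{2n-3}, \vm^4\rmm^{2n-3}$ pieces of the $\zeta$-sum through $c-\kappa_0 = 2\rmm$, $c+\kappa_0 = 2\rpp$, and $\rpp\rmm = 1$. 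The $\mathcal{O}(1)$ remainders collapse via the identity $\vm^2\rpp - \vp^2\rmm = \kappa_0(\lambda^2-1)$ derived in the proof of Lemma~\ref{lemma_de}, producing the constants $4\rho(1+\lambda^2)$, $2(\lambda^2-1)^2$, $16\rho^2/\kappa_0^2$ together with the two $\rpp^{2n-1}\pm\rmm^{2n-1}$ terms in $\mathcal{S}$.

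The main obstacle is pure algebraic bookkeeping: the two double sums together produce around twenty elementary pieces that must be matched against the eight-term formula for $\mathcal{S}$. A subtler point is the $(c\kappa_0)^{-1}$ denominator appearing in the $\rho^2$-contributions, which has no counterpart in $\mathcal{S}$; I expect these to cancel pairwise between $2\rho^2\sum_i\rpp^{4i-2n-2}$ and the $\rho^2$-cross terms of the $\zeta$-sum, via the telescoping identity $\rpp^{2n-1}-\rpp^{2n-3} = \rpp^{2n-2}\kappa_0$ (with its $\rmm$-analogue). Only once this cancellation is carried out does the final expression assume the stated form.
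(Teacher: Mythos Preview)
Your proposal is correct and follows essentially the same route as the paper. The paper writes
\[
\kappa_0^2\kappa^2\,\tr(Q^{-2})
= 2\sum_{j=1}^n \kappa_{n-j}^2\,\zeta_j \;-\; \sum_{i=1}^n \kappa_{i-1}^2\kappa_{n-i}^2,
\]
which after the reindexing $j\mapsto n-i+1$ is exactly your $\sum_{i=1}^n \kappa_{i-1}^2\kappa_{n-i}^2 + 2\sum_{i=1}^{n-1}\kappa_{i-1}^2\,\zeta_{n-i}$; it then substitutes the closed form of $\zeta_j$ from the preceding lemma and evaluates the resulting geometric and arithmetic--geometric sums (organized there as $\mathcal{S}_1,\mathcal{S}_2,\mathcal{S}_3$), precisely as you outline. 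One minor correction: the identity $\vm^2\rpp-\vp^2\rmm=\kappa_0(\lambda^2-1)$ is stated in the proof of the $\zeta$-lemma, not of Lemma~\ref{lemma_de}.
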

\begin{proof}
By using the fact that $\tr(Q^{-2})  = \sum_{i=1}^n Q^{-2}_{ii} = \sum_{i=1}^n \sum_{j=1}^n Q_{ij}^{-1} Q_{ji}^{-1}$,
we have
\begin{equation*}
\begin{aligned}
\tr(Q^{-2}) 
= 2 \sum_{j=1}^n \sum_{i=1}^j u_i^2 v_j^2  - \sum_{i=1}^n u_i^2 v_i^2 
= \frac{1}{\kappa_0^2 \kappa^2} \left( 2\sum_{j=1}^n \kappa_{n-j}^2 \zeta_j - \sum_{i=1}^n \kappa_{i-1}^2 \kappa_{n-i}^2 \right).
\end{aligned}
\end{equation*}
We have 
\begin{equation*}
\begin{aligned}
&\sum_{i=1}^n \kappa_{i-1}^2 \kappa_{n-i}^2 = \sum_{i=1}^n  (\vp \rpp^{i-1} - \vm \rmm^{i-1})^2 (\vp \rpp^{n-i} - \vm\rmm^{n-i})^2 \\
&= n(\vp^4 \rpp^{2n-2} + \vm^4 \rmm^{2n-2} + 4\rho^2) - 4\rho \vp^2 \sum_{i=1}^n \rpp^{2i-2} - 4\rho \vm^2 \sum_{i=1}^n \rmm^{2i-2} + 2 \rho^2   \sum_{i=1}^n  \rmm^{2n-4i+2} \\
&= n(\vp^4 \rpp^{2n-2} + \vm^4 \rmm^{2n-2} + 4\rho^2) - \frac{4\rho}{\kappa_0} (\vp^2 \rpp^{2n-1} - \vm^2 \rmm^{2n-1}) + \frac{2\rho^2}{c\kappa_0}(\rpp^{2n} - \rmm^{2n}) + 4\rho (1- \lambda^2) .
\end{aligned}
\end{equation*}
and 
\begin{equation*}
\begin{aligned}
 \sum_{j=1}^n \kappa_{n-j}^2 \zeta_j &=  (\lambda^2-1)\zeta_n -2\rho \mathcal{S}_1 + \frac{\vp^2}{\kappa_0} \mathcal{S}_2 - \frac{\vm^2}{\kappa_0} \mathcal{S}_3, 
\end{aligned}
\end{equation*}
where
\begin{equation*}
\begin{aligned}
\mathcal{S}_1 &= \sum_{j=1}^n j \kappa_{n-j}^2  
= \sum_{j=1}^n  j (\vp \rpp^{n-j} - \vm\rmm^{n-j})^2 \\
& = -\rho n(n+1)  + \vp^2 \rpp^{2n} \sum_{j=1}^n j\rmm^{2j} + \vm^2 \rmm^{2n} \sum_{j=1}^n j\rpp^{2j} \\
& =  -\rho n(n+1)  + \vp^2 (\rpp^{2n} - n-1+ n \rmm^2)/ \kappa_0^2 + \vm^2  (\rmm^{2n} -n-1 + n \rpp^2) / {\kappa_0^2} \\
& =  -\rho n(n+1)+ n(\lambda^2 - 1) - 1 + {(\vp^2\rpp^{2n} +\vm^2\rmm^{2n} - 2 \rho)}/{\kappa_0^2},
\end{aligned}
\end{equation*}
\begin{equation*}
\begin{aligned}
\mathcal{S}_2 &= \sum_{j=1}^n \kappa_{n-j}^2  \rpp^{2j-1} 
= \sum_{j=1}^n (\vp \rpp^{n-j} - \vm\rmm^{n-j})^2 \rpp^{2j-1}\\
& = n \vp^2 \rpp^{2n-1}  - 2\rho \sum_{j=1}^n \rpp^{2j-1} +  \vm^2\rmm^{2n+1} \sum_{j=1}^n \rpp^{4j} \\
& = n \vp^2 \rpp^{2n-1} - 2\rho(\rpp^{2n} -1)/\kappa_0 +  \vm^2 (\rpp^{2n+1} - \rmm^{2n-1} / (c\kappa_0).
\end{aligned}
\end{equation*}
\begin{equation*}
\begin{aligned}
\mathcal{S}_3 &= \sum_{j=1}^n \kappa_{n-j}^2  \rmm^{2j-1} 
= \sum_{j=1}^n (\vp \rpp^{n-j} - \vm\rmm^{n-j})^2 \rmm^{2j-1}\\
& = n \vm^2 \rmm^{2n-1}  - 2\rho \sum_{j=1}^n \rmm^{2j-1} +  \vp^2 \rpp^{2n+1} \sum_{j=1}^n \rmm^{4j} \\
& =  n \vm^2 \rmm^{2n-1} + 2\rho (\rmm^{2n} -1) / \kappa_0 - \vp^2 (\rmm^{2n+1} - \rpp^{2n-1}) / (c\kappa_0).
\end{aligned}
\end{equation*}
The result is then obtained by combining the terms together.
\end{proof}

\begin{corollary} \label{cor5}
If $c > 2$, $\lim_{n\rightarrow \infty} n^{-2}\tr(Q^{-2}) = 0$.
\end{corollary}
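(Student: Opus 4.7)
The plan is to adapt the strategy used in Corollary \ref{cor4}: when $c > 2$, the characteristic roots satisfy $\rpp > 1$ and $\rmm = \rpp^{-1} \in (0,1)$, so $\rmm^n \to 0$ geometrically fast. I would substitute the explicit expression from Theorem \ref{thm_trace2} and argue that every summand in $\mathcal{S}$, once divided by $n^2 \kappa_0^2 \kappa^2$, tends to zero.

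The first step is to estimate the denominator. Since $\kappa = \vp^2 \rpp^{n-1} - \vm^2 \rmm^{n-1}$ and $\rmm^{n-1} \to 0$, we have $\kappa = \vp^2 \rpp^{n-1}(1 + o(1))$, hence $\kappa_0^2 \kappa^2 \sim \kappa_0^2 \vp^4 \rpp^{2n-2}$, which grows exponentially like $\rpp^{2n}$.

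The second step is to classify the eight summands of $\mathcal{S}$ by their order of growth in $n$. The purely polynomial pieces ($4\rho^2 n^2$, $-8\rho n(\lambda^2-1)$, and the constants $4\rho(1+\lambda^2)$, $2(\lambda^2-1)^2$, $16\rho^2/\kappa_0^2$) are at most $O(n^2)$, so after division by $n^2 \kappa_0^2 \kappa^2$ each such ratio is $O(\rpp^{-2n}) \to 0$. The $n$-times-exponential summand $nc(\vp^4 \rpp^{2n-2} - \vm^4 \rmm^{2n-2})/\kappa_0$ contributes a ratio whose dominant part is $nc\vp^4 \rpp^{2n-2}/(\kappa_0 \cdot n^2 \kappa_0^2 \vp^4 \rpp^{2n-2}) = O(1/n)$, with the $\rmm^{2n-2}$ part negligibly smaller. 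The two remaining exponential summands of order $\rpp^{2n-1}$ (with $\rmm^{2n-1}$ corrections) contribute ratios of order $1/n^2$, which also vanish.

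Summing these estimates yields $\lim_{n\to\infty} n^{-2} \tr(Q^{-2}) = 0$. No cancellation between summands is required, so the only real obstacle is the bookkeeping of the many terms appearing in $\mathcal{S}$; this is routine since we need only demonstrate vanishing of each ratio, not a precise asymptotic.
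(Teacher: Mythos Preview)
Your proposal is correct and follows essentially the same approach as the paper: both arguments plug in the explicit formula $\tr(Q^{-2}) = (\kappa_0^2\kappa^2)^{-1}\mathcal{S}$ from Theorem~\ref{thm_trace2}, use that $\rpp>1$ and $\rmm\in(0,1)$ when $c>2$ so that $\kappa^2$ grows like $\vp^4\rpp^{2n-2}$, and then check term by term that each summand of $\mathcal{S}/(n^2\kappa_0^2\kappa^2)$ tends to zero. The paper carries out the same bookkeeping by writing out the full limit expression explicitly, whereas you organize it by growth order; the content is the same.
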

\begin{proof}
Note that $\kappa^2 = \vp^4 \rpp^{2n-2} -2\rho^2 + \vm^4 \rmm^{2n-2}$. If $c > 2$, then $\rpp > 1$, which implies $\lim_{n \rightarrow \infty} \rpp^n = \infty$ and hence $\lim_{n \rightarrow \infty} \kappa^2 = \infty$. Since $0 < \rmm < 1$:  $\lim_{n \rightarrow \infty} \rmm^n = 0$. From Theorem \ref{thm_trace2}, $\lim_{n\rightarrow \infty} n^{-2}\tr(Q^{-2}) = \kappa_0^{-2} \lim_{n\rightarrow \infty} \mathcal{S}/(n^2\kappa^2)$ where, $\lim_{n\rightarrow \infty} \mathcal{S}/(n^2\kappa^2)$ equals
\begin{equation*}
\begin{aligned}
&\lim_{n\rightarrow \infty} \left\{  \frac{4\rho^2}{\kappa^2} + \frac{4\rho(1 + \lambda^2) +2(\lambda^2 -1)^2}{n^2\kappa^2} 
+ \frac{c(1 - \vm^4\vp^{-4} \rmm^{4n-4})}{\kappa_0 n (1 - 2\rho^2\vp^{-4} \rmm^{2n-2} + \vm^4\vp^{-4} \rmm^{4n-4})}  
+ \frac{16\rho^2}{\kappa_0^2n^2 \kappa^2}
 \right.\\
& \left. \qquad 
-\frac{8\rho(\lambda^2-1)}{n\kappa^2} 
+ \frac{2\kappa_0(\lambda^2-1)(1 - \vm^2\vp^{-2} \rmm^{4n-2} ) - 4\rho c(1 +\vm^2 \vp^{-2}\rmm^{4n-2})}{\kappa_0^2 n^2 (\vp^2 \rmm -2\rho^2\vp^{-2}\rmm^{2n-1} + \vm^4\vp^{-2} \rmm^{4n-3})} \right\} = 0.
\end{aligned}
\end{equation*}
\\[-12mm]
\end{proof}

\section{Bounds for row sums of the inverse}
In this section, we focus on the AR(1) plus noise model which is discussed in Section \ref{sec:eg} and derive bounds for row sums of $Q^{-1}$. Suppose $|\phi| < 1$, $\phi \neq 0$ and $\gamma > 0$. Let $b = \phi/ |\phi|$, $c = (1+\gamma+\phi^2) /|\phi| > 2$ and $d = (1+\gamma)/ |\phi| > 1$. Note that $\lambda = |\phi| < d$. We first present Lemma \ref{boundv} before deriving the bound in Theorem \ref{thm_bds}.

\begin{lemma} \label{boundv}
For $i=1, \dots, n$, $|v_i + v_{n-i+1}|< 2/(d-\lambda)$.
\end{lemma}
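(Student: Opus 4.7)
The plan is to reduce the claim to the polynomial inequality $(\vp + \vm)(\kappa_{n-i} + \kappa_{i-1}) < 2\kappa$, which then yields the bound after scaling. First I would invoke Corollary \ref{cor_cnot2} to write $v_j = b^{j-1}\kappa_{n-j}/\kappa$, where Property \ref{kappatve} guarantees that each $\kappa_j$ and $\kappa$ is positive. Since $|b| = 1$, the triangle inequality gives the uniform estimate $|v_i + v_{n-i+1}| \le (\kappa_{n-i} + \kappa_{i-1})/\kappa$, regardless of the sign of $b$ or the parity of $n$, so it suffices to bound this non-negative quantity.

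Next I would rewrite $d - \lambda$ in terms of the characteristic roots. Because $c = \rpp + \rmm$, we obtain the identity $d - \lambda = c - 2\lambda = \vp + \vm$, and $d - \lambda = (1 + \gamma - \phi^2)/|\phi| > 0$ under the stated parametrization. Thus it suffices to prove $(\vp + \vm)(\kappa_{n-i} + \kappa_{i-1}) < 2\kappa$. A critical preliminary step is to pin down the sign of $\rho$: substituting $\lambda = |\phi|$ into the characteristic polynomial $r^2 - cr + 1$ yields $\lambda^2 - c\lambda + 1 = \phi^2 - 1 - \gamma - \phi^2 + 1 = -\gamma < 0$, which places $\lambda$ strictly between the two roots, so $\vm < 0 < \vp$ and hence $\rho = \vp\vm < 0$.

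With these ingredients I would expand using $(\vp + \vm)\kappa_j = \vp^2 \rpp^j - \vm^2 \rmm^j + \rho(\rpp^j - \rmm^j)$, apply it at $j = n-i$ and $j = i-1$, and bound each piece. The estimate $\rpp^{n-i} + \rpp^{i-1} \le \rpp^{n-1} + 1 < 2\rpp^{n-1}$ follows from $\rpp > 1$ and the fact that both exponents lie in $[0, n-1]$; the reverse inequality $\rmm^{n-i} + \rmm^{i-1} \ge 2\rmm^{n-1}$ follows from $\rmm < 1$ applied to the same range of exponents; and the $\rho$-term is non-positive because $\rho < 0$ and $\rpp^k \ge \rmm^k$ for $k \ge 0$. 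Summing these inequalities gives $(\vp + \vm)(\kappa_{n-i} + \kappa_{i-1}) < 2(\vp^2\rpp^{n-1} - \vm^2\rmm^{n-1}) = 2\kappa$, and dividing by the positive quantity $(d - \lambda)\kappa$ closes the argument.

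The hard part will be Step 2, establishing $\rho < 0$. Without this sign information the third term above is non-negative and the clean bound breaks, so the AR(1)-plus-noise parametrization is genuinely required rather than just the generic hypotheses $c > 2$ and $d > 1$; in particular, the fact that $\lambda^2 - c\lambda + 1$ equals the specific negative quantity $-\gamma$ is what forces $\lambda$ between $\rmm$ and $\rpp$. The remaining estimates are uniform in $i$ and rely only on the monotonicity $\rpp > 1 > \rmm > 0$ together with the location of the exponents in $[0, n-1]$.
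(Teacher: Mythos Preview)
Your argument is correct, but it takes a different route from the paper. Both proofs begin with the same triangle-inequality reduction $|v_i+v_{n-i+1}|\le(\kappa_{n-i}+\kappa_{i-1})/\kappa$, but then diverge. The paper first invokes Property~\ref{kappadiff} to show that $\kappa_{n-i}+\kappa_{i-1}$ is maximised at the extremal index $i=1$, so it only needs $(d-\lambda)(\kappa_{n-1}+\kappa_0)<2\kappa$; this last inequality is then obtained by pure recurrence manipulation, yielding the explicit positive remainder $(c-2)\bigl(\kappa_1+c\sum_{j=1}^{n-2}\kappa_j+\kappa_{n-2}\bigr)$. In particular the paper never needs the sign of $\rho$, only $c>2$ and $d>1$. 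Your approach instead bounds $(\vp+\vm)(\kappa_{n-i}+\kappa_{i-1})$ directly for every $i$ via the exponential representation, using $\rpp^{n-i}+\rpp^{i-1}<2\rpp^{n-1}$, $\rmm^{n-i}+\rmm^{i-1}\ge 2\rmm^{n-1}$, and the crucial fact that $\rho<0$ kills the cross term. This is shorter and avoids both Property~\ref{kappadiff} and the reduction to $i=1$, but it genuinely depends on the AR(1)-plus-noise parametrisation through $\rho=-\gamma<0$, exactly as you flag; the paper's proof would survive in settings where $\rho\ge 0$ while yours would not. One cosmetic point: your strict inequality $\rpp^{n-1}+1<2\rpp^{n-1}$ needs $n\ge 2$, so the trivial case $n=1$ should be noted separately.
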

\begin{proof}
First we show that $\kappa |v_i + v_{n-i+1}| \leq \kappa_{n-1} + \kappa_0$ for $i=1, \dots, n$. Taking into account Corollary \ref{cor_cnot2},
\begin{equation*}
\kappa|v_i + v_{n-i+1}| 
= {|b^{i-1}\kappa_{n-i} + b^{n-i} \kappa_{i-1}|}
\leq {\kappa_{n-i} + \kappa_{i-1}}.
\end{equation*}
Thus the desired inequality holds for $i=1$ by direct substitution. Applying the inequality in Property \ref{kappadiff} (repeatedly),
\begin{equation*}
\kappa_{n-i} + \kappa_{i-1} < {\kappa_{n-i+1} + \kappa_{i-2}} 
< \dots < {\kappa_{n-1} + \kappa_0},
\end{equation*}
for $i=2, \dots, \lceil n/2 \rceil$. Thus $\kappa |v_i + v_{n-i+1}| \leq \kappa_{n-1} + \kappa_0$ for $i=1, \dots, \lceil n/2 \rceil$. By symmetry, this inequality holds for $i=1, \dots, n$, since $v_i + v_{n-i+1} = v_{n-i+1} + v_{n-(n-i+1)+1}$. It suffices to show that $ (d-\lambda)(\kappa_{n-1} + \kappa_0) < 2\kappa$. Writing $\kappa = d \kappa_{n-1} - \kappa_{n-2}$, applying the identity of Property \ref{kappadiff} and using the fact that $\kappa_1 = d\kappa_0$,
\begin{equation*} \label{eq_ident1}
\begin{aligned}
2\kappa- (d-|\phi|)(\kappa_{n-1} + \kappa_0) 
& = c (\kappa_{n-1} - \kappa_{n-2}) + (c-2)\kappa_{n-2} -  \kappa_1 + |\phi| \kappa_0 \\
& = c \left( \kappa_1 - \kappa_0 + (c-2) \sum_{j=1}^{n-2} \kappa_j  \right)+ (c-2)\kappa_{n-2} - 2 \kappa_1 +c \kappa_0 \\
& = (c-2) \left( \kappa_1 + c \sum_{j=1}^{n-2} \kappa_j + \kappa_{n-2}  \right) > 0.
\end{aligned}
\end{equation*}
\end{proof}

\begin{theorem} \label{thm_bds}
If $0 < \phi <1$,
\begin{equation*}
 \frac{1}{d - \phi} <s_i < \frac{1}{c-2}.
\end{equation*}
If $-1 < \phi < 0$, 
\begin{equation*}
\frac{2}{c+2} - \frac{1}{d + \phi} < s_i <  \frac{1}{d + \phi}.
\end{equation*}
\end{theorem}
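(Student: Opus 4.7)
My plan is to specialize the closed form for $s_i$ from Theorem~\ref{thm_rowsum} (the $c\neq 2$ branch applies here, since $c>2$) and then combine it with the two-sided bound $|v_i+v_{n-i+1}| < 2/(d-\lambda)$ from Lemma~\ref{boundv}. The expression becomes $s_i = [1-(b-\lambda)(v_i+v_{n-i+1})]/(c-2b)$. In the AR(1)-plus-noise parametrization I will repeatedly exploit the algebraic identity
$$c = \frac{1+\gamma+\phi^2}{|\phi|} = \frac{1+\gamma}{|\phi|} + |\phi| = d + |\phi|,$$
which gives $d+\phi=c$ when $\phi>0$ and $d-\phi=c$ when $\phi<0$. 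This identity is what makes the bounds derived from Lemma~\ref{boundv} collapse exactly to $1/(d\mp\phi)$.

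For the case $0<\phi<1$ we have $b=1$ and $\lambda=\phi$, so $s_i = [1-(1-\phi)(v_i+v_{n-i+1})]/(c-2)$. Property~\ref{uvtve} applies ($c>2$, $d>1$, $b=1$), so $v_i+v_{n-i+1}>0$; combined with $1-\phi>0$ and $c-2>0$ this immediately yields the upper bound $s_i<1/(c-2)$. For the lower bound, substituting the upper estimate $v_i+v_{n-i+1}<2/(d-\phi)$ from Lemma~\ref{boundv} gives
$$s_i > \frac{(d-\phi)-2(1-\phi)}{(c-2)(d-\phi)} = \frac{d+\phi-2}{(c-2)(d-\phi)},$$
and using $d+\phi=c$ the right-hand side simplifies to $1/(d-\phi)$.

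For the case $-1<\phi<0$ we have $b=-1$ and $\lambda=-\phi$, so $s_i = [1+(1-\phi)(v_i+v_{n-i+1})]/(c+2)$. Now Property~\ref{uvtve} does not apply (since $b=-1$), so I cannot pin down the sign of $v_i+v_{n-i+1}$; however, Lemma~\ref{boundv} still supplies the two-sided bound $|v_i+v_{n-i+1}|<2/(d+\phi)$. Feeding in the upper end and using the identity $d-\phi=c$ yields $s_i < [(d+\phi)+2(1-\phi)]/[(c+2)(d+\phi)] = 1/(d+\phi)$; feeding in the lower end and rearranging gives $s_i > 2/(c+2) - 1/(d+\phi)$. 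The main obstacle I anticipate is purely bookkeeping: carefully tracking signs in the second case (where $\phi<0$, $\lambda=-\phi$, and $b-\lambda=-(1+|\phi|)$) and ensuring the correct endpoint of the Lemma~\ref{boundv} interval is used to obtain the upper versus lower estimate of $s_i$. Once the identity $d\pm\phi=c$ is in hand the remaining arithmetic is immediate.
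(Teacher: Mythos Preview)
Your proposal is correct and follows essentially the same route as the paper: specialize the $c\neq 2$ row-sum formula from Theorem~\ref{thm_rowsum}, invoke Property~\ref{uvtve} for positivity of $v_i+v_{n-i+1}$ when $b=1$ to get the upper bound in the first case, and use the two-sided estimate of Lemma~\ref{boundv} for the remaining three inequalities. Your explicit observation that $c=d+|\phi|$ (hence $d+\phi=c$ for $\phi>0$ and $d-\phi=c$ for $\phi<0$) is precisely the simplification the paper performs without naming it.
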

\begin{proof}
If $0 < \phi <1$, $b=1$ and $\{v_i|i=1, \dots, n$\} is positive from Property \ref{uvtve}. Thus, 
\begin{equation*}
s_i = \frac{ 1-(1-\phi)(v_i + v_{n-i+1}) }{c-2} < \frac{1}{c-2}.
\end{equation*}
This bound is tight as $\phi$ approaches 1. From Lemma \ref{boundv}, $v_i + v_{n-i+1} < 2/(d-\phi)$ implies that 
\begin{equation*}
s_i> \frac{1-2(1-\phi)/(d-\phi)}{c-2} =  \frac{1}{d - \phi}.
\end{equation*}
If $-1 < \phi < 0$, $b=-1$. From Lemma \ref{boundv}, $|v_i + v_{n-i+1}| < 2/(d+\phi)$ implies that 
\begin{equation*}
\begin{gathered}
\frac{1-2(1-\phi)/(d+\phi)}{c+2} < s_i = \frac{1+(1-\phi)(v_i + v_{n-i+1})}{c+2} < \frac{1+2(1-\phi)/(d+\phi)}{c+2}.
\end{gathered}
\end{equation*}
The result is obtained by simplifying the above inequality.
\end{proof}

\section{Application to AR(1) process with observational noise} \label{sec:eg}
Consider a univariate state space model where the observations $\{y_t\}$ are noisy and the latent states $\{x_t\}$ follow a stationary AR(1) process,
\begin{equation}\label{CP} 
\begin{aligned}
y_t &= x_t + \sigma_\epsilon \epsilon_t, \quad  (t = 1, \ldots, n),\\
x_{t} &= \mu + \phi (x_{t-1} - \mu) + \sigma_\eta \eta_t, \quad (t = 1, \ldots, n), \\
x_0 &\sim N\left(\mu, {\sigma_\eta^2}/{(1-\phi^2)} \right).
\end{aligned}
\end{equation} 
The $\{\epsilon_t\}$ and $\{\eta_s\}$ sequences are distributed as standard normals, and they are independent of each other and of $\{x_t\}$ for all $t$ and $s$. Let $\gamma = \sigma_\eta^2/\sigma_\epsilon^2$ be the signal-to-noise ratio and $\Theta = (\mu, \sigma_\eta, \sigma_\epsilon, \phi)^T$ be the vector of model parameters, where $|\phi| < 1$, $\sigma_\eta>0$, $\sigma_\epsilon > 0$ and $\mu \in \mathbb{R}$. This model is called the AR(1) plus noise or ARMA(1,1) model. \cite{Pitt1999} and \cite{Fruhwirth-Schnatter2004} observe that the convergence rate of the Gibbs sampler, when applied to the AR(1) plus noise model, is dependent on the parametrization. In particular, \eqref{CP} is known as the {\em centered} parametrization as the latent states are centered about $\mu$ and depend on $\sigma_\eta^2$. The {\em noncentered} parametrization is given by
\begin{equation}\label{NCP} 
\begin{aligned}
y_t &= \sigma_\eta b_t + \mu + \sigma_\epsilon \epsilon_t, \quad (t = 1, \ldots, n),\\
b_{t} &= \phi  b_{t-1} + \sigma_\eta \eta_t, \quad (t = 1, \ldots, n), \\
b_0 &\sim N\left(0, {1}/{(1-\phi^2)} \right),
\end{aligned}
\end{equation} 
where the latent states $\{b_t\}$ are now independent of $\mu$ and $\sigma_\eta^2$. These two models are equivalent as the likelihood, $p(y|\Theta)$, where $y = (y_1, \dots, y_n)^T$, obtained upon integrating out the latent states, is the same regardless of the parametrization. As the optimal parametrization is data dependent, \cite{Tan2017} introduced a partially noncentered parametrization,
\begin{equation}\label{PNCP} 
\begin{aligned}
y_t &= \sigma_\eta^a \alpha_t + w_t \mu + \sigma_\epsilon \epsilon_t, \quad  (t = 1, \ldots, n),\\
 \sigma_\eta^a \alpha_t &= \tilde{w}_t \mu + \phi (\sigma_\eta^a \alpha_{t-1} - \tilde{w}_t \mu) + \sigma_\eta \eta_t, \quad (t = 1, \ldots, n), \\
\sigma_\eta^a \alpha_0 &\sim N\left( \tilde{w}_0 \mu, {\sigma_\eta^2}/{(1-\phi^2)} \right),
\end{aligned}
\end{equation} 
where $\tilde{w}_t = 1 - w_t$.  Let $w = (w_1, \dots, w_n)^T$ and $\tilde{w} = \bone - w$, where $\bone$ is a vector of ones of length $n$. This parametrization includes the centered ($w=0$, $a=0$) and noncentered ($w=\bone, a=1$) parametrizations as special cases. Of interest are the values of the working parameters, $w$ and $a$, which optimize the convergence rates of the Gibbs sampler or the expectation-maximization (EM) algorithm, used to fit the AR(1) plus noise model to a time series. \cite{Tan2017} showed that the convergence rate of the EM algorithm is optimized by
\begin{equation} \label{wopt}
w^{\opt} = \bone - \sigma_\epsilon^{-2} \Omega^{-1} \bone,
\end{equation}
if $\phi$, $\sigma_\epsilon^2$, $\sigma_\eta^2$ are known, and by
\begin{equation} \label{aopt}
a^{\opt} = 1 - \frac{z^T \Omega^{-1} \Lambda \Omega^{-1}z}{n \sigma_\eta^2}, \quad w^{\opt} = (\mu\Omega)^{-1} \left( \frac{2\Lambda \Omega^{-1} z}{a^{\opt} \sigma_\eta^2} - z \right),
\end{equation}
if $\mu$, $\phi$, $\sigma_\epsilon^2$ are known. An alternating expectation conditional maximization algorithm, which uses these parametrizations in different cycles was then proposed for inferring $\Theta$. In \eqref{wopt}, $\Omega = \sigma_\eta^{-2} |\phi| Q$ is a tridiagonal matrix, where $Q$ is of the form in \eqref{tridiag_form} if $\phi \neq 0$, with $b= \phi/|\phi|$, $ c= (1+\gamma+\phi^2)/|\phi| > 2$, $d= (1+\gamma)/|\phi| > 1$ and $\lambda = c-d = |\phi|$. If $\phi=0$, $\Omega = \sigma_\eta^{-2} (1+\gamma)I$. In \eqref{aopt}, $z = \sigma_\epsilon^{-2}(y- \mu\bone)$ and $\Lambda$ is a tridiagonal matrix with diagonal $(1, 1+\phi^2, \dots, 1+\phi^2, 1)$ and off-diagonal elements equal to $-\phi$. 

To investigate the behavior of $w^\opt$ and $a^\opt$, such as their bounds, dependence on $\phi$ and $\gamma$ and large-sample properties, we require the explicit inverse of $Q$ and its properties. Suppose we are interested in understanding which parametrization is preferred when inferring $\mu$ given $\phi$, $\sigma_\epsilon^2$, $\sigma_\eta^2$. The expression of $w^{\opt}$ in \eqref{wopt} involves $\Omega^{-1}\bone$ which represents the row sums of $\Omega^{-1}$. Hence we can use Theorem \ref{thm_rowsum} to compute the elements in $w^{\opt}$. If $\phi \neq 0$, 
\begin{equation*}
\begin{aligned}
w_i^{\opt} &= 1 - \frac{ \gamma}{|\phi|}s_i  =\frac{(1-\phi)^2 +b\gamma (1-\phi) (v_i + v_{n-i+1})}{(1-\phi)^2 + \gamma}.
\end{aligned}
\end{equation*}
From this expression, we observe that $w^\opt$ is a centrosymmetric vector and $w_i^{\opt}$ depends only on $\phi$ and the signal-to-noise ratio $\gamma$. From Theorem \ref{thm_bds}, we can easily obtain bounds for $w_i^\opt$. If $0 < \phi < 1$,
\begin{equation*}
1 - \frac{\gamma}{(1-\phi)^2 + \gamma} <w_i^\opt < 1 - \frac{\gamma}{1- \phi^2 + \gamma }.
\end{equation*}
Thus $w_i^\opt$ lies strictly in the interval $(0,1)$ if $0 < \phi < 1$. Moreover, $w_i^{\opt} \rightarrow 0$ (centered parametrization is preferred) as $\phi \rightarrow 1$ or $\gamma \rightarrow \infty$. If $ -1 < \phi < 0$, then 
\begin{equation*}
 1 - \frac{\gamma}{1- \phi^2 + \gamma } <w_i^\opt < 1 + \frac{\gamma}{1- \phi^2 + \gamma } - \frac{2\gamma}{(1- \phi)^2 + \gamma }.
\end{equation*}
In this case, $w_i^{\opt}$ is positive but not necessarily bounded above by 1. Figure \ref{fig1} shows the values of $w_i^{\opt}$ and its bounds for some data simulated from the AR(1) plus noise model. We set $n=100$, $\mu=3$, $\sigma_\eta^2 = 0.02$, $\sigma_\epsilon^2 = 0.1$ and $\phi = \{-0.95, 0.1, 0.95\}$. As noted above, $w_i^{\opt}$ is close to zero when $\phi=0.95$, close to one when $\phi=0.1$ and it is not bounded above by one when $\phi <0$. Note that the signal-to-noise ratio, $\gamma=0.2$.
\begin{figure}[htb!]
\centering
\includegraphics[width=0.32\textwidth]{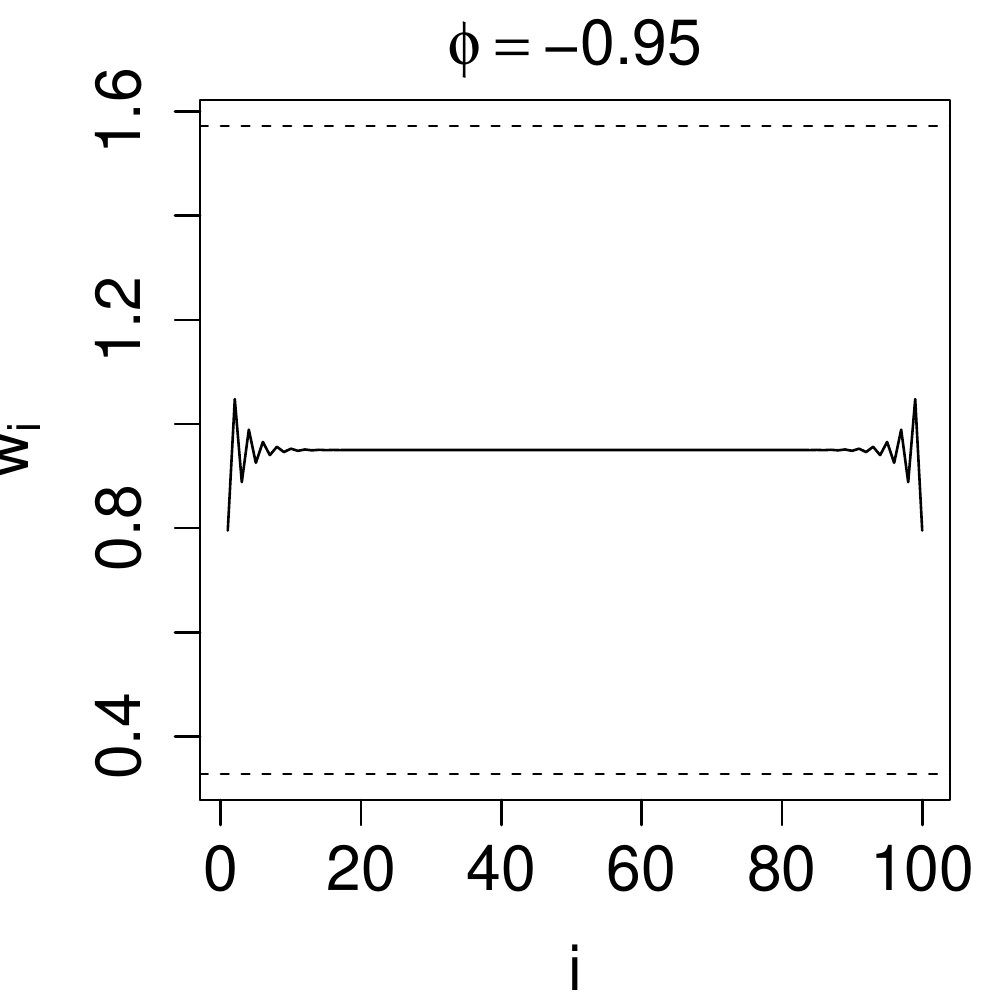}
\includegraphics[width=0.32\textwidth]{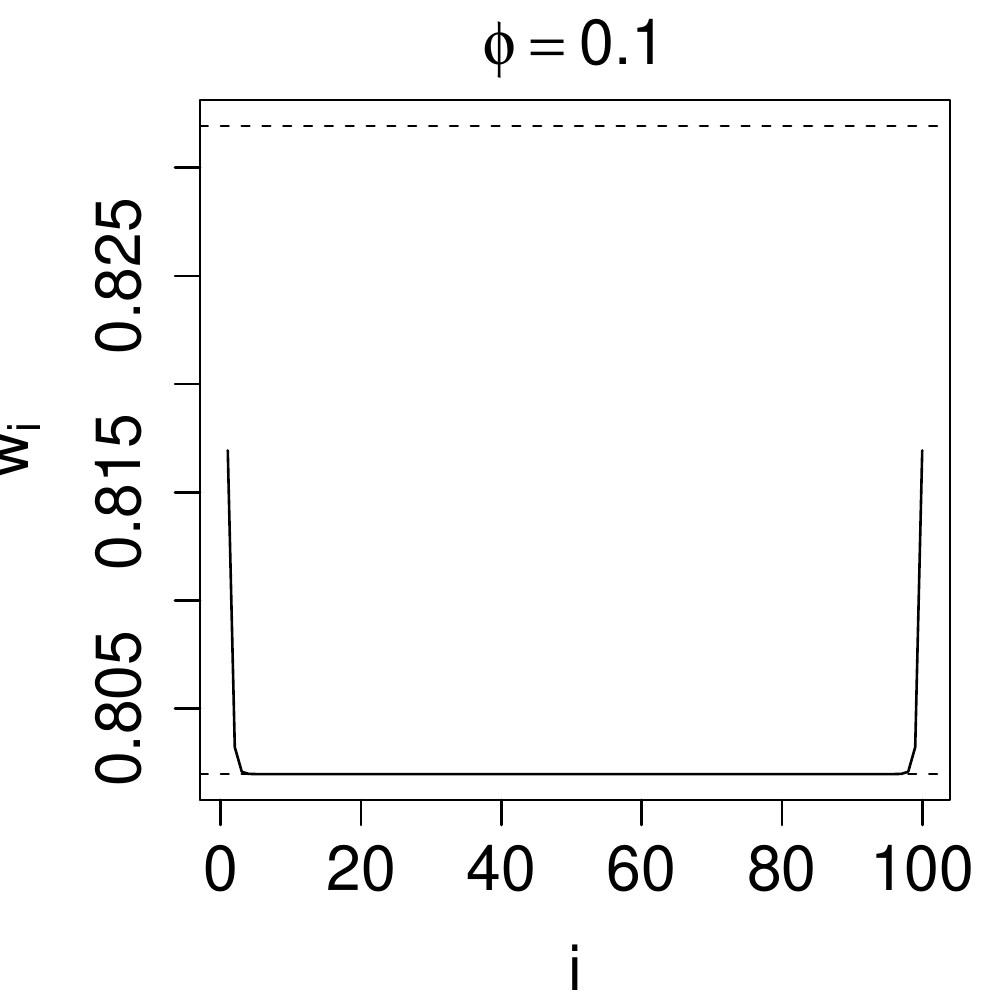}
\includegraphics[width=0.32\textwidth]{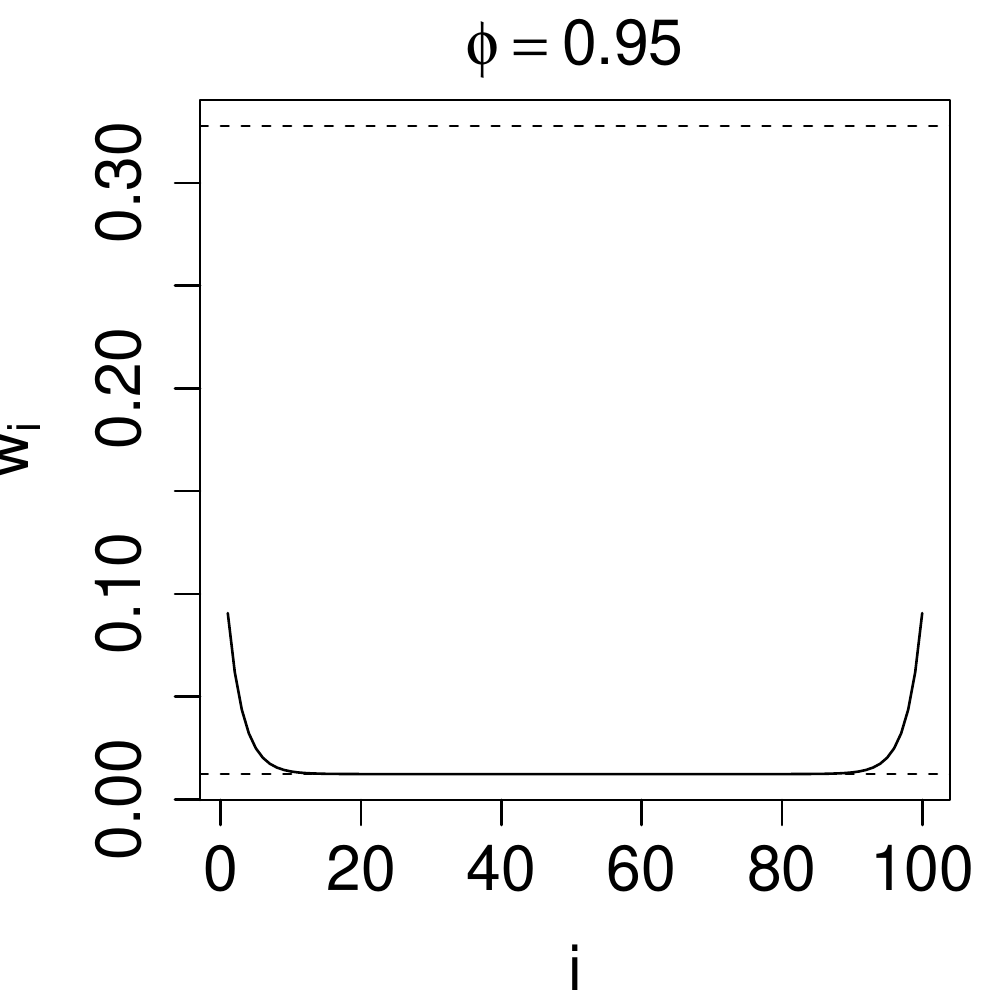}
\caption{Values of $w_i^{\opt}$ for $i=1, \dots, 100$ (solid) and the lower and upper bounds (dotted). \label{fig1}}
\end{figure}

Now consider the value of $a^{\opt}$ in \eqref{aopt}. The expression of $a^\opt$ is highly complex and it depends directly on the observations $y$ through $z$. As it is difficult to infer the behavior of $a^{\opt}$ directly from \eqref{aopt}, we attempt to study its large sample properties instead. \cite{Tan2017} showed that the mean and variance of $a^{\opt}$ are given by
\begin{equation*}
E(a^{\opt}) = 1 - \frac{\gamma\tr(Q^{-1})}{n|\phi|},\quad  \var(a^\opt) = \frac{2\gamma^2\tr(Q^{-2})}{n^2\phi^2}.
\end{equation*}
From Corollary \eqref{cor4}, 
\begin{equation*}
\begin{aligned}
\lim_{n \rightarrow \infty} E(a^\opt) &= 1 - \frac{\gamma}{|\phi|}  \lim_{n \rightarrow \infty} \frac{\tr(Q^{-1})}{n} 
& = 1 - \frac{\gamma}{|\phi|\kappa_0} = 1 - \frac{\gamma}{\sqrt{[(1-\phi)^2 + \gamma][(1+\phi)^2 + \gamma]}}. 
\end{aligned}
\end{equation*}
From Corollary \eqref{cor5}, $\lim_{n \rightarrow \infty} \var(a^\opt) = 2\gamma^2/(\phi^2) \lim_{n \rightarrow \infty} \tr(Q^{-2})/n^2 = 0$. Hence, we may consider using the limit of $E(a^\opt)$ as an estimate of $a^{\opt}$ when the sample size $n$ is large as this limit is more efficient to compute than $a^\opt$. Figure \ref{fig2} shows how the values of $E(a^\opt)$ and $\var(a^\opt)$ approach their limits as $n$ increases when $\phi = 0.95$ and $\gamma = 0.2$. The values of $E(a^\opt)$ and $\var(a^\opt)$ are computed using Theorems \ref{thm_trace} and \ref{thm_trace2}, which provide efficient ways to compute the trace of the inverse matrices when $n$ is large.
\begin{figure}[htb!]
\centering
\includegraphics[width=0.75\textwidth]{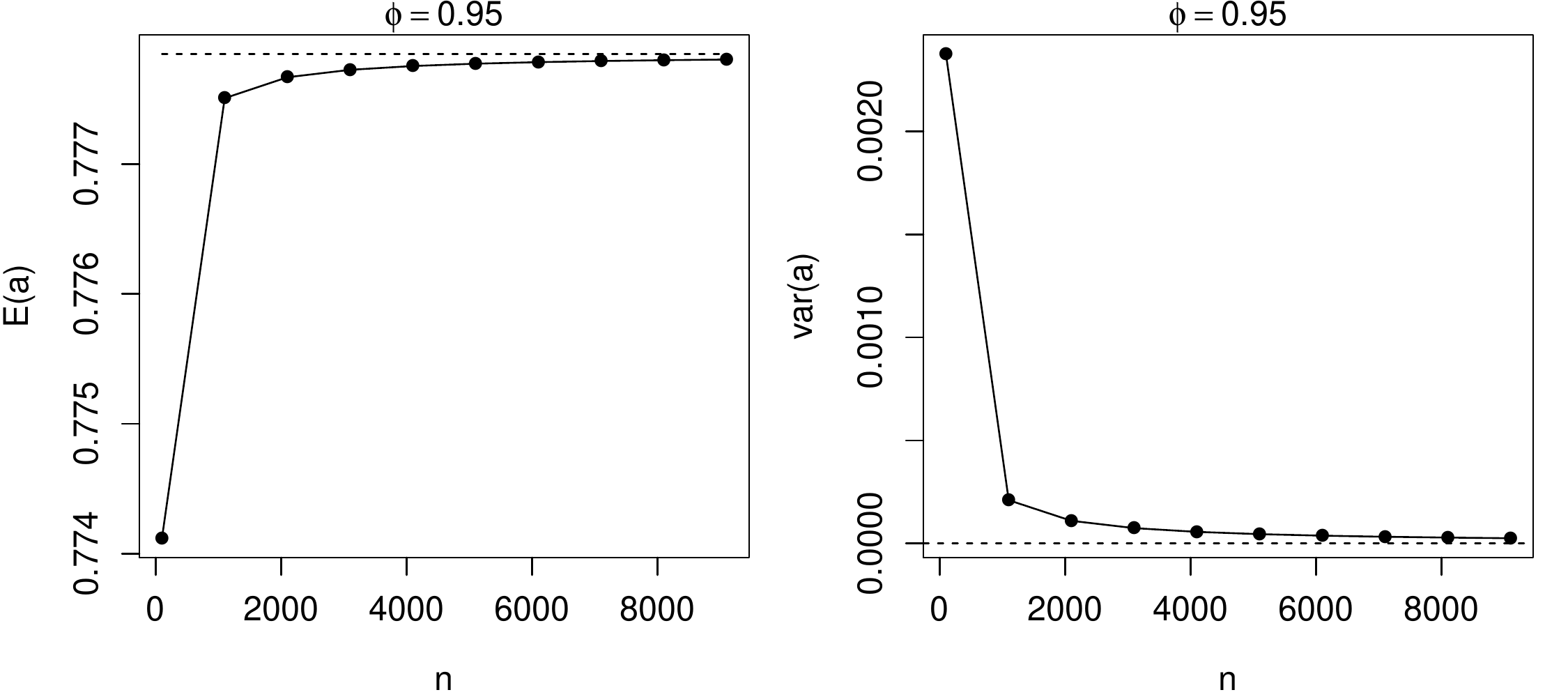}
\caption{Values of $E(a^\opt)$ and $\var(a^\opt)$ for $n=1000, 2000, \dots, 10000$. The limits are shown in dotted lines. \label{fig2}}
\end{figure}

\section{Conclusion}
In this article, we have derived explicit expressions for the inverse of a class of tridiagonal matrices that often arise in interpolation problems and in statistical models which use first order autoregression to induce dependence in the covariance structure. Such analytic formulas will be important in model estimation and for studying the properties of estimators, such as their bounds or large-sample behavior as illustrated in the application to the AR(1) plus noise model.

\section{Acknowledgments}
Linda Tan is supported by the start-up grant (R-155-000-190-133). We thank the editor, associate editor and referees for their comments which have helped to improve the manuscript.

\bibliographystyle{chicago}
\bibliography{ref}

\end{document}